\newtheorem{theorem}{Theorem}
\newtheorem{corollary}[theorem]{Corollary}
\newtheorem{lemma}{Lemma}
\newtheorem{prop}[theorem]{Proposition}
\theoremstyle{definition}
\newtheorem{definition}{Definition}
\newtheorem{eg}{Example}
\newcommand{\mC}{\mathcal{C}}
\newcommand{\mD}{\mathcal{D}}
\newcommand{\mE}{\mathcal{E}}
\newcommand{\mI}{\mathcal{I}}
\newcommand{\mN}{\mathfrak{N}}
\newcommand{\mR}{\mathcal{R}}
\newcommand{\mF}{\mathbb F}
\def \no {\noindent}
\def \R{\mathbb{R}}
\newcommand{\N}{\mathbb{N}}
\def \C {\mathcal{C}}
\newcommand{\E}{\mathcal{E}}
\def \U {\mathcal{U}}
\def \D{\mathcal{D}}
\def \supo{\operatorname{supp}}
\newcommand{\ring}[1]{\mathcal{R}_{#1}}
\newcommand{\cod}{\mathfrak{C}}
\newcommand{\NC}{\mathfrak{N}}
\newcommand{\NN}{\mathbb{N}}
\newcommand{\F}{\mathcal{F}}
\newcommand{\G}{\mathcal{G}}
\begin{document}
	
	\title{Neural category}
	\author{Neha Gupta and Suhith K N\thanks{Suhith's research is partially supported by Inspire fellowship from DST grant IF190980.} \\
		Department of Mathematics\\
		Shiv Nadar Insititution of Eminence\\
		Uttar Pradesh - 201314 \\
		\texttt{neha.gupta@snu.edu.in, sk806@snu.edu.in} \\ \\ }
\date{}
	\maketitle
	\begin{abstract}
		A neural code on $ n $ neurons is a collection of subsets of the set $ [n]=\{1,2,\dots,n\} $. Curto et al. \cite{curto2013neural} associated a ring $\ring{\mathcal{C}}$ (neural ring) to a neural code $\mathcal{C}$. A special class of ring homomorphisms between two neural rings, called neural ring homomorphisms were introduced by Curto and Youngs \cite{curto2020neural}.   The main work in this paper comprises of constructing two categories. First is the $\mathfrak{C}$ category, which is a sub category of SETS consisting of neural codes and code maps. Second is the neural category $\mathfrak{N}$, which is a subcategory of \textit{Rngs} consisting of neural rings and neural ring homomorphisms. Then, the rest of the paper characterizes the properties of these two categories like initial and final objects, products, coproducts, limits, etc. Also, we show that these two categories are in dual equivalence.
		
	\end{abstract}
	
	\section{Introduction}
	A neural code on $n$ neurons, denoted by $ \C $, is a collection of subsets of the  set $ [n]=\{1,2,3,\dots,n\}. $  One can also see neural code as binary strings and throughtout this paper we will consider neural codes as binary strings defined below: 

\begin{definition}\cite{curto2020neural}
	A $neural$ $code$ on $n$ neurons is a set of binary strings of length $n$ for some  $ n\in \N $.  The elements(binary strings) of a neural code are called its $code$ $words$. So, given a neural code $\C$ on $n$ neurons we can think of it as $\C\subseteq \{0,1\}$.
\end{definition}

 Given neural codes $\mathcal{C} \subseteq \{0, 1\}^n$ and $\mathcal{D} \subseteq \{0, 1\}^m$, on $n$ and $m$ neurons respectively, a $code$ $map$ is any function
$q : \mC \to \mD$ sending each code word $c\in \mC$ to another codeword $q(c) \in\mD$.

The importance of neural codes comes from the discovery of place cells in the hippocampus of rats by O’Keefe and Dostrovsky in 1971. Their discovery highlighted that cells in the rat's hippocampus fire in specific locations of the rat's environment.  Place cells are neurons that are essential for the rat's ability to perceive space. Since at every particular location there are just few cells that fire. Thus the binary string we obtain when we consider certain $n$ place cells at a particular environment becomes a codeword. Further, if we consider binary strings for the complete environment we obtain a neural code. Consider $n$ neurons and $\U=\{U_1,\dots, U_n\}$ be a a collection of sets in $\R^k$, i.e., for all $i \in [n]$ we have $U_i\subseteq \R^k$. Let $U_i$ be the location in which $i\text{th}$ neuron fires. Then the neural code obtained from this environment is given by $$\C(\U)= \left\{c=c_1^{}\dots c_n^{}\in\{0,1\}^{n} \ \ \Big\vert\ \displaystyle\bigcap_{j\in \supo(c)} U_j \setminus \displaystyle\bigcup_{i\not\in\supo(c) } U_i \not= \phi \right\},$$ where $\supo(c)=\{i\in[n]\mid c_i=1\}.$ Experimental results \cite{curto2017makes} show that the $U_i$'s, i.e., a specific environment where the neuron fired was approximately open convex sets in $\R^2$. 

Next, the fundamental research question in this area is that given a collection of binary strings of length $n$, or in other words given a neural code on $n$ neurons does there exists a collection of sets $\U=\{U_1,\dots,U_n\}$ in a euclidean space $\R^k$ for some $k>0$, such that $\C=\C(\U)$. Further, the experimental data (mentioned above) motivates to ask whether there exist collection of open convex sets, $\U$ that satisfies the condition $\C=\C(\U).$ However, this is not true, for example the code $\C=\{110,011,100,001,000\}$ does not have a open convex realization (Refer Example 2.1 \cite{jeffs2020morphisms}). However, it is also known that every neural code is at least convex realizable. This result was proved by Franke and Muthiah \cite{franke2018every} in 2019.

Further, the goal shifted in classifying the neural codes into open convex and not open convex. This motivated in introducing the algebraic direction to neural codes. Curto et al. \cite{curto2013neural} introduced neural ring for every neural code.  For any neural code $\mathcal{C} \subseteq \{0, 1\}^n$,they defined the associated ideal $\mI_\mC \subseteq \mF_2[x_1, \dots, x_n]$ as follows:
$$ \mI_\mC = \{f \in \mF_2[x_1, \dots, x_n]\mid f(c) = 0 \text{ for all } c \in \mC\}.$$
The $neural$ $ring$ $\mR_\mC$ is then defined as $R_\mC = \mF_2[x_1, \dots, x_n]/\mI_\mC$.
Note that considering a neural code $\mC$ to be the emptyset, the corresponding neural ring $\mR_\emptyset$ will simply be the singleton trivial ring. Further note that, the neural ring $\mR_\mC$ is precisely the ring of functions $\mC \to \{0, 1\}$, denoted $\mF_2^\mC$. Also, one can then obtain an immediate relationship between code maps and ring homomorphisms using the pullback map. Given a code map $q : \mC \to \mD$, each $f \in \mR_\mD$ is a function $f : \mD \to \{0, 1\}$,
and therefore one can pull back $f$ by $q$ to a function $f \circ q : \mC \to \{0, 1\}$, which is an element of $\mR_\mC$. Thus for any $q : \mC \to \mD$, it has a pullback map $q^{\star} : \mR_\mD \to \mR_\mC$, where $q^{\star}(f) = f \circ q$, as shown 
	\[
	\begin{tikzcd}
			& \{0,1\}\\
			\mC \arrow{ur}{q^*f = f\circ q}\arrow[swap]{r}{q} & \mD\arrow[swap]{u}{f}
		\end{tikzcd}
	\]
In fact, the pullback provides a bijection between code maps and ring homomorphisms, as mentioned in a proposition in \cite{curto2020neural}. 
\begin{prop}\cite[Proposition 2.2]{curto2020neural}
	There is a 1-1 correspondence between code maps $q : \mC \to \mD$ and ring homomorphisms $\phi : \mR_\mD \to \mR_\mC$, given by the pullback map. That is, given a code map $q : \mC \to \mD$, its pullback $q^{\star} : \mR_\mD \to \mR_\mC$ is a ring homomorphism; conversely, given a ring homomorphism $\phi : \mR_\mD \to \mR_\mC$,
	there is a unique code map $q_\phi : \mC \to \mD$ such that $(q_\phi)^{\star} = \phi$.
\end{prop}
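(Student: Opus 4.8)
The plan is to use the concrete description $\mR_\mC\cong\mF_2^\mC$ of the neural ring as the ring of $\{0,1\}$-valued functions on $\mC$, together with the fact that $\mR_\mD$ is generated as an $\mF_2$-algebra by the classes of the coordinate functions $y_1,\dots,y_m$, so that a ring homomorphism out of $\mR_\mD$ is determined by the images of these generators. First I would verify that the pullback is well defined: since $\mR_\mD$ is literally the ring of functions $\mD\to\{0,1\}$ with pointwise operations, precomposition with a code map $q:\mC\to\mD$ sends such a function to a function $\mC\to\{0,1\}$, and the assignment $q^\star(f)=f\circ q$ visibly preserves sums, products and the constants $0,1$; hence $q^\star:\mR_\mD\to\mR_\mC$ is a ring homomorphism, acting on generators by $q^\star(y_j)=y_j\circ q$, i.e. the function $c\mapsto q(c)_j$. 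Injectivity of $q\mapsto q^\star$ is then immediate: if $q_1^\star=q_2^\star$ then $q_1(c)_j=q_2(c)_j$ for all $c\in\mC$ and all $j\in[m]$, so $q_1=q_2$.

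For surjectivity, given a ring homomorphism $\phi:\mR_\mD\to\mR_\mC$ I would build the code map directly from the images of the generators. Every element $g\in\mR_\mD$ satisfies $g^2=g$: this holds for each $y_j$ because it is $\{0,1\}$-valued, and in a commutative ring of characteristic $2$ the elements $g$ with $g^2=g$ form a subring, which here contains the generators and is therefore all of $\mR_\mD$. Consequently $\phi(g)$ is an idempotent of $\mR_\mC=\mF_2^\mC$, hence a $\{0,1\}$-valued function on $\mC$; in particular each $\phi(y_j)$ is such a function, and I set $q_\phi(c)=\big(\phi(y_1)(c),\dots,\phi(y_m)(c)\big)\in\{0,1\}^m$. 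The heart of the argument — the step I expect to be the main obstacle — is showing that $q_\phi(c)$ actually lies in $\mD$, so that $q_\phi$ is a genuine code map. I would argue by contradiction: if $d:=q_\phi(c)\notin\mD$, consider $\rho_d=\prod_{i\in\supo(d)}y_i\cdot\prod_{j\notin\supo(d)}(1-y_j)\in\mF_2[y_1,\dots,y_m]$. Since $d\notin\mD$, every codeword of $\mD$ disagrees with $d$ in some coordinate, so $\rho_d$ vanishes on $\mD$, i.e. $\rho_d\in\mI_\mD$ and $\rho_d=0$ in $\mR_\mD$. Applying $\phi$ gives $\prod_{i\in\supo(d)}\phi(y_i)\cdot\prod_{j\notin\supo(d)}(1-\phi(y_j))=0$ in $\mR_\mC$; evaluating this function at $c$ produces $\prod_{i\in\supo(d)}d_i\cdot\prod_{j\notin\supo(d)}(1-d_j)=1$, a contradiction. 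Hence $q_\phi:\mC\to\mD$.

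It remains to check that $q_\phi$ recovers $\phi$ and is the unique such code map. The maps $(q_\phi)^\star$ and $\phi$ are both ring homomorphisms $\mR_\mD\to\mR_\mC$, and on the generator $y_j$ we have $(q_\phi)^\star(y_j)=y_j\circ q_\phi$, the function $c\mapsto q_\phi(c)_j=\phi(y_j)(c)$, so they agree on $y_1,\dots,y_m$ and therefore on all of $\mR_\mD$; thus $(q_\phi)^\star=\phi$. Uniqueness of the code map inducing $\phi$ is precisely the injectivity established in the first paragraph. Together these show that $q\mapsto q^\star$ is a bijection from code maps $\mC\to\mD$ onto ring homomorphisms $\mR_\mD\to\mR_\mC$, with inverse $\phi\mapsto q_\phi$.
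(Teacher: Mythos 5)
Your argument is correct. Note, however, that there is no in-paper proof to compare against: the paper states this proposition as a quoted result from Curto and Youngs and does not prove it, so your write-up is supplying the standard argument rather than diverging from one in the text. Your route is essentially the canonical one: pullback along $q$ is a ring homomorphism because the operations on $\mR_\mD=\mF_2^\mD$ are pointwise; injectivity of $q\mapsto q^\star$ follows by testing on the coordinate functions; and for surjectivity you define $q_\phi(c)=\big(\phi(y_1)(c),\dots,\phi(y_m)(c)\big)$ and use the characteristic function $\rho_d$ to rule out $q_\phi(c)\notin\mD$, since $d\notin\mD$ forces $\rho_d\in\mI_\mD$, hence $\phi(\rho_d)=0$, while evaluation at $c$ would give $1$. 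That is exactly the right mechanism, and the final identification $(q_\phi)^\star=\phi$ on the generators $y_j$ closes the loop. Two small remarks: the detour through idempotents is unnecessary, since every element of $\mR_\mC=\mF_2^\mC$ is already a $\{0,1\}$-valued function; and your key computation uses $\phi(1)=1$ (in the step $\phi(1-y_j)=1-\phi(y_j)$), so you should say explicitly that ring homomorphisms are taken to be unital --- this is the convention in force here, and without it the statement would fail (e.g.\ the zero map $\mR_\mD\to\mR_\mC$ corresponds to no code map). With that convention made explicit, the proof is complete, and it also silently handles the degenerate case $\mC=\emptyset$ (everything is vacuous) and $\mD=\emptyset$ (no unital homomorphism out of the zero ring into a nonzero $\mR_\mC$ exists, matching the absence of code maps).
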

So far, we have neural codes and code maps between them; associated neural ring, with  usual ring homomorphisms between them, and a correspondence between these objects. The above result surely guarantees the correspondence, but also reveals that the corresponding ring homomorphisms need not preserve any structure of the associated codes. This is so because any code map (simply any function between sets) will have a corresponding ring homomorphism. The next proposition tells us that even ring isomorphisms are not good enough to reveal any information about the corresponding code maps. 
\begin{prop}\cite[Proposition 2.3]{curto2020neural} A ring homomorphism $\phi : \mR_\mD \to \mR_\mC$ is an isomorphism if and only if the corresponding code map $q_\phi : \mC \to \mD$ is a bijection.
\end{prop}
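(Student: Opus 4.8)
\emph{Sketch of proof.} The plan is to deduce the statement from Proposition 2.2 together with the (routine) functoriality of the pullback; informally, Proposition 2.2 realizes $\mC \mapsto \mR_\mC$, $q \mapsto q^\star$ as a contravariant assignment that is a bijection on morphism sets, and any such assignment automatically preserves and reflects isomorphisms once it is known to respect composition and identities. So the first step is to record that for code maps $q : \mC \to \mD$ and $p : \mD \to \mE$ one has $(p\circ q)^\star = q^\star\circ p^\star$ and $(\mathrm{id}_\mC)^\star = \mathrm{id}_{\mR_\mC}$; both are immediate from the defining formula $q^\star(f)=f\circ q$, since $(p\circ q)^\star(f) = f\circ p\circ q = q^\star(p^\star(f))$.

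For the ``if'' direction, assume the code map $q_\phi : \mC \to \mD$ is a bijection. Then its set-theoretic inverse $q_\phi^{-1} : \mD \to \mC$ is again a code map (a code map is just a function between the codes), so it has a pullback ring homomorphism $(q_\phi^{-1})^\star : \mR_\mC \to \mR_\mD$. Using $(q_\phi)^\star = \phi$ from Proposition 2.2 and the functoriality just noted, one computes
\[ \phi\circ (q_\phi^{-1})^\star = (q_\phi)^\star\circ (q_\phi^{-1})^\star = (q_\phi^{-1}\circ q_\phi)^\star = (\mathrm{id}_\mC)^\star = \mathrm{id}_{\mR_\mC}, \]
and, symmetrically, $(q_\phi^{-1})^\star\circ\phi = (q_\phi\circ q_\phi^{-1})^\star = \mathrm{id}_{\mR_\mD}$. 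Hence $(q_\phi^{-1})^\star$ is a two-sided inverse of $\phi$, so $\phi$ is a ring isomorphism.

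For the ``only if'' direction, assume $\phi$ is a ring isomorphism. Then $\phi^{-1} : \mR_\mC \to \mR_\mD$ is a ring homomorphism, so by Proposition 2.2 there is a code map $p : \mD \to \mC$ with $p^\star = \phi^{-1}$. I would then show $p$ inverts $q_\phi$: the pullback of the code map $q_\phi\circ p : \mD \to \mD$ is $p^\star\circ(q_\phi)^\star = \phi^{-1}\circ\phi = \mathrm{id}_{\mR_\mD} = (\mathrm{id}_\mD)^\star$, so by the uniqueness clause of Proposition 2.2 (the pullback is injective on code maps) $q_\phi\circ p = \mathrm{id}_\mD$; likewise $p\circ q_\phi = \mathrm{id}_\mC$. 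Thus $q_\phi$ is a bijection.

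I do not expect a real obstacle here, since the argument is entirely formal once Proposition 2.2 is available. The only points requiring care are the bookkeeping of contravariance — keeping straight the source and target of each homomorphism — and the observation that it is precisely the uniqueness half of Proposition 2.2 (equivalently, the fact that $\mR_\mC = \mF_2^\mC$ separates the points of $\mC$) that upgrades ``the pullback of a code map equals the identity homomorphism'' to ``the code map equals the identity.'' One could instead attempt a more computational route using $\mR_\mC\cong \mF_2^{|\mC|}$, where a ring isomorphism forces $|\mC|=|\mD|$ by comparing $\mF_2$-dimensions; but one would still need the pullback computation above to conclude that the particular map $q_\phi$ (not merely some bijection $\mC\to\mD$) is bijective, so the functorial argument is the more economical one.
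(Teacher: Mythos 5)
Your argument is correct and complete. Note, however, that this proposition is quoted in the paper from Curto and Youngs \cite{curto2020neural} without any proof being reproduced, so there is no in-paper argument to compare against; judged on its own, your proof is the standard formal one and all the delicate points check out: the pullback is contravariantly functorial ($(p\circ q)^\star=q^\star\circ p^\star$, $(\mathrm{id}_\mC)^\star=\mathrm{id}_{\mR_\mC}$), the set-theoretic inverse of a bijective code map is again a code map (a code map is just any function between codes), so its pullback furnishes a two-sided ring inverse of $\phi$; and in the converse direction the uniqueness clause of Proposition 2.2 is exactly what lets you pass from $(q_\phi\circ p)^\star=(\mathrm{id}_\mD)^\star$ to $q_\phi\circ p=\mathrm{id}_\mD$, and similarly for $p\circ q_\phi$. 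In categorical language you have simply observed that the pullback assignment is a fully faithful contravariant functor from codes to rings, and such a functor both preserves and reflects isomorphisms; this is cleaner and more economical than a dimension-count over $\mF_2$, which, as you note, would in any case not identify the specific map $q_\phi$ as the bijection.
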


The above two results express the connection between the neural codes on one hand and the neural rings on the other. However, they also highlight that the usual ring homomorphisms and even isomorphisms are not enough to get back any information about the code maps. From the last result one can conclude that the neural rings are rings of functions from $\mC$ to $\{0, 1\}$, and that its abstract structure
depends only on the number of codewords, $|\mC|$. Thus if one considers such rings abstractly, they express no additional structure of the corresponding neural code, not even the length of the codewords (or number of neurons, $n$). So, the question is what algebraic constraints can be put on homomorphisms between neural rings in order to trap some meaningful class of code maps?

Consider, for example, the code maps such as permutation and adding or removing trivial neurons. In these cases, the code maps act by preserving the activity of each neuron in a trivial way. 
Keeping these code maps in mind, we restrict to a class of maps in the category of rings, that respect the elements of the neural ring corresponding to individual neurons. We denote the individual neuron as a variable $x_i$. 

\begin{definition}\cite[Definition 3.1]{curto2020neural}
	Let $\mC \subseteq \{0, 1\}^n$ and $\mD \subseteq \{0, 1\}^m$ be neural codes, and let $\mR_\mC = \mF_2[y_1, \cdots, y_n]/\mI_\mC$
	and $\mR_\mD = \mF_2[x_1, \cdots, x_m]/\mI_\mD$ be the corresponding neural rings. A ring homomorphism $\phi : \mR_\mD \to \mR_\mC$
	is a $neural$ ring homomorphism if $\phi(x_j) \in \{y_i|i \in [n]\}\cup \{0, 1\}$ for all $j \in[m]$. We say that a neural ring homomorphism $\phi$ is a $neural$ $ring$ $isomorphism$ if it is a ring isomorphism and its inverse is 	also a neural ring homomorphism.
\end{definition}
 It is straightforward to see that the composition of neural ring homomorphisms is again a neural
ring homomorphism. This is a result given by Curto and Youngs \cite{curto2020neural},

\begin{lemma}\cite[Lemma 3.3]{curto2020neural} If $\phi: \mR_\mD \to \mR_\mC$ and $\psi : \mR_\mE \to \mR_\mD$ are neural ring homomorphisms, then their
	composition $\phi\circ\psi$ is also a neural ring homomorphism. Moreover, if $\phi$ and $\psi$ are both neural ring isomorphisms,
	then their composition $\phi\circ\psi$ is also a neural ring isomorphism. \label{lemma1}
\end{lemma}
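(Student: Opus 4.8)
The plan is to simply unwind Definition 3.1 and track the image of each generating variable of the source ring under the composite. Write $\mR_\mC = \mF_2[y_1,\dots,y_n]/\mI_\mC$, $\mR_\mD = \mF_2[x_1,\dots,x_m]/\mI_\mD$, and $\mR_\mE = \mF_2[z_1,\dots,z_\ell]/\mI_\mE$. First I would note that $\phi\circ\psi : \mR_\mE \to \mR_\mC$ is already a ring homomorphism, being the composite of the ring homomorphisms $\psi$ and $\phi$ (the domains and codomains match: $\psi$ lands in $\mR_\mD$, which is the source of $\phi$). So the only thing to verify is the neurality condition, namely that $(\phi\circ\psi)(z_k) \in \{y_1,\dots,y_n\}\cup\{0,1\}$ for every $k\in[\ell]$.

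To check this, fix $k$ and compute $(\phi\circ\psi)(z_k) = \phi(\psi(z_k))$. Since $\psi$ is a neural ring homomorphism, $\psi(z_k) \in \{x_1,\dots,x_m\}\cup\{0,1\}$, so I would split into two cases. If $\psi(z_k) = x_j$ for some $j\in[m]$, then $(\phi\circ\psi)(z_k) = \phi(x_j) \in \{y_1,\dots,y_n\}\cup\{0,1\}$ because $\phi$ is neural. If instead $\psi(z_k)\in\{0,1\}$, then since any ring homomorphism sends $0\mapsto 0$ and $1\mapsto 1$, we get $(\phi\circ\psi)(z_k)\in\{0,1\}$. In either case the neurality condition holds, so $\phi\circ\psi$ is a neural ring homomorphism.

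For the ``moreover'' part, suppose in addition that $\phi$ and $\psi$ are neural ring isomorphisms. Then each is in particular a ring isomorphism, so $\phi\circ\psi$ is a ring isomorphism with inverse $(\phi\circ\psi)^{-1} = \psi^{-1}\circ\phi^{-1}$. By the definition of neural ring isomorphism, $\phi^{-1}$ and $\psi^{-1}$ are themselves neural ring homomorphisms, so applying the first part of the lemma to the composable pair $\phi^{-1}:\mR_\mC\to\mR_\mD$ and $\psi^{-1}:\mR_\mD\to\mR_\mE$ (in that order) shows $(\phi\circ\psi)^{-1}$ is a neural ring homomorphism. Hence $\phi\circ\psi$ is a ring isomorphism whose inverse is a neural ring homomorphism, i.e.\ a neural ring isomorphism.

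The argument is essentially a two-line case check, so there is no genuine obstacle. The only points that need a moment of care are getting the order of composition right so that sources and targets line up, and invoking unitality of ring homomorphisms ($1\mapsto 1$) in the second case — which is harmless here since $\mR_\mC\cong\mF_2^\mC$ always has the constant function $1$ as its identity, and even in the degenerate case $\mC=\emptyset$ the target is the zero ring, where the single element $0=1$ still lies in $\{0,1\}$.
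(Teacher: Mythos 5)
Your argument is correct: the paper itself only cites this lemma from Curto and Youngs without reproving it, and your direct verification (track each generator $z_k$, split into the case $\psi(z_k)=x_j$ versus $\psi(z_k)\in\{0,1\}$, then handle inverses via $(\phi\circ\psi)^{-1}=\psi^{-1}\circ\phi^{-1}$ and the first part) is exactly the standard proof of that result. The composition orders and the unitality point ($1\mapsto 1$, including the degenerate $\mR_\emptyset$ case) are all handled correctly, so there is nothing to fix.
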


Note that for a given neural code $\mC$, the identity ring homomorphism $\mR_\mC\to\mR_\mC$ is trivially a neural ring homomorphism. In fact, it is a neural ring isomorphism. Let us denote it as $I_\mC$ for a neural code $\mC$. This identity map sends $x_i$ to $y_i$ considering $I_\mC:\mR_\mC= \mF_2[x_1, \dots, x_n]/\mI_\C\to \mR_\mC= \mF_2[y_1, \dots, y_n]/\mI_\C$ with $|\mC|=n$ and $i\in[n]$.

The following theorem from \cite{curto2020neural} introduces the code maps, which yield neural ring homomorphisms. All of these code maps are useful in a neural context, and preserve the behavior of individual neurons. In fact, any neural ring homomorphisms correspond to code maps that are compositions of the following five elementary types of code maps. For proof, refer  \cite{curto2020neural}.

\begin{theorem}\cite[Theorem 3.4]{curto2020neural} A map $\phi : \mR_\mD \to\mR_\mC$ is a neural ring homomorphism if and only if $q_\phi$ is a composition of the following elementary code maps: \label{thmnrh}
	\begin{enumerate}
		\item Permutation
		\item Adding a trivial neuron (or deleting a trivial neuron)
		\item Duplication of a neuron (or deleting a neuron that is a duplicate of another)
		\item Neuron projection (deleting a not necessarily trivial neuron)
		\item Inclusion (of one code into another)
	\end{enumerate}
	Moreover, $\phi$ is a neural ring isomorphism if and only if $q_\phi$ is a composition of maps (1)-(3).
\end{theorem}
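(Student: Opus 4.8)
The plan is to prove the biconditional in both directions and then the ``moreover'' clause, using throughout the identification $\mathcal{R}_{\mathcal{C}} \cong \mathbb{F}_{2}^{\mathcal{C}}$ (under which the variable $x_{j} \in \mathcal{R}_{\mathcal{D}}$ is the coordinate function $d \mapsto d_{j}$), the pullback correspondence \cite[Proposition 2.2]{curto2020neural}, its refinement \cite[Proposition 2.3]{curto2020neural}, and Lemma~\ref{lemma1}. For the ``if'' direction, by Lemma~\ref{lemma1} it is enough to show that the pullback of each of the five elementary code maps is a neural ring homomorphism, and in each case one computes $q^{\star}(x_{j}) = x_{j}\circ q$ directly: a permutation $\sigma$ gives $q^{\star}(x_{j}) = y_{\sigma(j)}$; adding a trivial neuron fixes the old variables and sends the new one to $0$ or $1$; a duplication fixes the old variables and sends the new one to the duplicated $y_{i}$; a neuron projection and an inclusion send each variable to the corresponding $y$ (after reindexing). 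In every case $q^{\star}(x_{j}) \in \{y_{1},\dots,y_{n}\}\cup\{0,1\}$.

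For the ``only if'' direction, let $\phi : \mathcal{R}_{\mathcal{D}} \to \mathcal{R}_{\mathcal{C}}$ be a neural ring homomorphism, so $\phi(x_{j}) \in \{y_{1},\dots,y_{n}\}\cup\{0,1\}$ for every $j\in[m]$. Unwinding \cite[Proposition 2.2]{curto2020neural} shows that the code map $q_{\phi} : \mathcal{C}\to\mathcal{D}$ is the explicit ``generalized coordinate map'' whose $j$-th output coordinate is $c_{\sigma(j)}$ when $\phi(x_{j}) = y_{\sigma(j)}$, is $0$ when $\phi(x_{j})=0$, and is $1$ when $\phi(x_{j})=1$. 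I would then exhibit an explicit factorization of $q_{\phi}$ through elementary moves, applied in the order: first delete every neuron of $\mathcal{C}$ not used by any $\phi(x_{j})$ (neuron projections, type (4)); next append the correct numbers of constant $0$'s and $1$'s (type (2)); next duplicate each used neuron to the correct multiplicity (type (3)); next permute the coordinates into the order prescribed by $\mathcal{D}$ (type (1)). The code reached at this stage is exactly the image $q_{\phi}(\mathcal{C})$, and composing with the inclusion $q_{\phi}(\mathcal{C})\hookrightarrow\mathcal{D}$ (type (5)) recovers $q_{\phi}$. Checking that this composite computes $c\mapsto q_{\phi}(c)$ coordinate by coordinate is routine but bookkeeping-heavy, and the main obstacle is the degenerate case in which several $y_{i}$ coincide as elements of $\mathcal{R}_{\mathcal{C}}$ (that is, $\mathcal{C}$ already has duplicate neurons): then one must fix a choice of which $y_{i}$ each $\phi(x_{j})$ ``is'' and carry it consistently through the deletion and duplication steps.

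For the ``moreover'' clause, the backward implication is short: maps of types (1)--(3) are bijective code maps, so a composite of them is a bijection, whence its pullback --- which is a neural ring homomorphism by the ``if'' direction --- is a ring isomorphism by \cite[Proposition 2.3]{curto2020neural}; moreover the inverse code map is again a composite of maps of types (1)--(3) (the inverses of a permutation, of adding a trivial neuron, and of a duplication are of these same three types), so the inverse ring map is also neural and $\phi$ is a neural ring isomorphism. For the forward implication, assume $\phi$ is a neural ring isomorphism; then $q_{\phi}$ is a bijection by \cite[Proposition 2.3]{curto2020neural}, hence onto $\mathcal{D}$, so the inclusion step (5) above becomes the identity and drops out. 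It remains to see that the deletion steps (4) are inessential: since $\phi^{-1}$ is also neural, $\phi^{-1}(y_{i}) \in \{x_{1},\dots,x_{m}\}\cup\{0,1\}$ for each $i$, which forces each neuron $i$ of $\mathcal{C}$ either to be trivial (if $\phi^{-1}(y_{i})\in\{0,1\}$, since then $y_{i}=\phi(\phi^{-1}(y_{i}))\in\{0,1\}$) or to satisfy $\phi(x_{j})=y_{i}$ for some $j$. Hence every neuron the factorization deletes is either trivial --- so the deletion is of type (2) --- or a duplicate of a retained neuron --- so the deletion is of type (3); therefore $q_{\phi}$ is a composite of maps of types (1)--(3) only.
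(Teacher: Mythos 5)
Your proposal is correct and follows essentially the same route as the proof this paper relies on (the paper itself defers the argument to \cite{curto2020neural} and only sketches it): your ``generalized coordinate map'' for a neural ring homomorphism is exactly the key vector description of $q_\phi$ from the quoted Lemmas 3.7 and 3.8, the ``if'' direction is the same reduction to the five elementary maps via Lemma~\ref{lemma1}, and the factorization of a key-vector map into projections, trivial additions, duplications, a permutation, and an inclusion (with the inclusion and genuine projections dropping out in the isomorphism case) is the standard argument. Your handling of the degenerate duplicate-neuron case and of the ``moreover'' clause via neurality of $\phi^{-1}$ matches the cited treatment as well.
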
 

It is worth mentioning the idea behind the proof. The authors define a key vector $V$ associated to any neural ring homomorphism which completely and uniquely determines it. Using this key vector one can get the corresponding code map. Conversely, if a code map is given which is described using a key vector $V$, then the associated ring homomorphism is neural with the key vector $V$. So, the entire point is to be able to get this key vector for a code map, which will ensure that the code map corresponds to a neural ring homomorphism and conversely. Let us now see the exact definition of the of key vectors.
\begin{definition}\cite[Definition 3.6]{curto2020neural}
	Let $\phi: \mR_\mD \to \mR_\mC$ be a neural ring homomorphism, where $\mC$ and $\mD$ are codes on $n$ and $m$ neurons, respectively. The key vector of $\phi$ is the vector $V\in\{1,\cdots,n,0,u\}^m$ such that
	$$V_j=
	\begin{cases}
		i & \text{ if }\phi(x_j)=y_i\\
		0 & \text{ if }\phi(x_j)=0\\
		u & \text{ if }\phi(x_j)=1
	\end{cases}.
	$$
\end{definition}
Here, we have denoted the multiplicative identity of the ring with the symbol $u$. 
\begin{eg}
	Let $\mC=\{00,10,01\}$ and $\mD=\{00,10\}$. Define a ring homomorphism $\phi:\mR_\mD\to\mR_\mC$ as 
	$$\phi(\rho_{00}^{})=\rho_{00}^{}$$
	$$\phi(\rho_{10}^{})=\rho_{10}^{}+\rho_{01}^{}.$$
	In $\mR_\mD$, $x_1=\rho_{10}^{}$, and in $\mR_\mC$ $y_1=\rho_{10}^{}$ and $y_2=\rho_{01}^{}$. So,
	$$\phi(x_1)=\rho_{10}^{}+\rho_{01}^{}\notin\{y_1^{},y_2^{},0,1\}.$$
	Thus, $\phi$ is not neural. Also, observe that one can not define the key vector for such a ring homomorphism. 
\end{eg}
Next we discuss couple of lemmas that link key vectors and code maps. 
\begin{lemma}\cite[Lemma 3.7]{curto2020neural}
	Let $\phi:\ring{\D}\to \ring{\C}$ be a neural ring homomorphism with key vector $V$. Then the corresponding code map $q_\phi:\C\to\D$ is given by $q_\phi(c)=d, $ where $d_j=\begin{cases}
		c_i \quad & \text{ if } V_j =i \\0 \quad & \text{ if } V_j =0 \\ 1 \quad & \text{ if } V_j =u.
	\end{cases}$
\end{lemma}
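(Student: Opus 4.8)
The plan is to use the identification of $\ring{\C}$ and $\ring{\D}$ with rings of $\{0,1\}$-valued functions, together with the pullback correspondence recalled above, which says that $\phi$ is exactly $(q_\phi)^\star$; in other words $\phi(f)=f\circ q_\phi$ for every $f\in\ring{\D}$. The whole computation then reduces to unwinding what this identity says on the generators $x_j$.

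First I would recall that, under the identification $\ring{\D}=\mF_2^\D$, the class of the polynomial $x_j$ is precisely the $j$th coordinate function $d\mapsto d_j$ on $\D$, and likewise each $y_i\in\ring{\C}$ is the $i$th coordinate function $c\mapsto c_i$ on $\C$. Now fix a codeword $c\in\C$ and set $d:=q_\phi(c)\in\D$. Evaluating the identity $\phi(x_j)=x_j\circ q_\phi$ at $c$ yields
\[
\bigl(\phi(x_j)\bigr)(c)=x_j\bigl(q_\phi(c)\bigr)=d_j ,
\]
so it only remains to read off the left-hand side from the key vector $V$.

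Next I would split into the three cases in the definition of $V$. If $V_j=i$, then $\phi(x_j)=y_i$, hence $\bigl(\phi(x_j)\bigr)(c)=y_i(c)=c_i$, so $d_j=c_i$. If $V_j=0$, then $\phi(x_j)=0$ is the zero function, so $\bigl(\phi(x_j)\bigr)(c)=0$ and $d_j=0$. If $V_j=u$, then $\phi(x_j)=1$ is the constant function $1$, so $\bigl(\phi(x_j)\bigr)(c)=1$ and $d_j=1$. Since $c$ and the index $j$ were arbitrary, this is exactly the asserted formula for $q_\phi(c)=d$.

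The argument is essentially bookkeeping, so there is no serious obstacle; the only points requiring care are the identification of the generator $x_j$ with the corresponding coordinate function and the contravariance in the pullback correspondence, so that we are genuinely evaluating $\phi(x_j)\in\ring{\C}$ at a point of $\C$ rather than of $\D$. It is also worth noting that the fact that $d=q_\phi(c)$ lands in $\D$ needs no separate verification here, as it is already built into the existence of the code map $q_\phi$ supplied by the pullback correspondence.
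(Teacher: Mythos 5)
Your argument is correct: identifying $x_j$ and $y_i$ with coordinate functions and evaluating the pullback identity $\phi(x_j)=x_j\circ q_\phi$ at each $c\in\C$ is exactly the standard proof of this lemma, which the paper itself does not reprove but cites from Curto and Youngs, where the same unwinding of the key-vector cases is carried out. No gaps; the case analysis on $V_j$ covers all possibilities allowed by the definition of a neural ring homomorphism.
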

\begin{lemma}\cite[Lemma 3.8]{curto2020neural}
	Let $\C$ and $\D$ be codes on $n$ and $m$ neurons, respectively. Suppose $q:\C\to \D$ is a code map and $V\in\{1,\dots,n,0,u\}^m$ such that $q$ is described by $V$; that is for all $c\in\C,$ $q=d, $ where $d_j=\begin{cases}
		c_i \quad & \text{ if } V_j =i \\0 \quad & \text{ if } V_j =0 \\ 1 \quad & \text{ if } V_j =u.
	\end{cases}$ Then the associated ring homomorphism $\phi_q$ is a neural ring  homomorphism with key vector $V$. 
\end{lemma}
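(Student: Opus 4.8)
The plan is to unwind the pullback description of the ring homomorphism attached to $q$ and combine it with the identification of a neural ring with a ring of $\{0,1\}$-valued functions. Recall that the associated ring homomorphism $\phi_q$ is the pullback $q^\star:\ring{\D}\to\ring{\C}$, $\phi_q(f)=f\circ q$, which is a ring homomorphism by the pullback correspondence \cite[Proposition 2.2]{curto2020neural}, where $\ring{\D}=\mF_2^{\D}$ and $\ring{\C}=\mF_2^{\C}$ are viewed as the rings of functions $\D\to\{0,1\}$ and $\C\to\{0,1\}$, respectively. The one translation to make explicit is that, under this identification, the generator $x_j\in\ring{\D}=\mF_2[x_1,\dots,x_m]/\mI_\D$ is exactly the $j$-th coordinate function $d\mapsto d_j$ (evaluating the polynomial $x_j$ at a codeword $d$ returns $d_j$); likewise $y_i\in\ring{\C}$ is the function $c\mapsto c_i$, and $0,1\in\ring{\C}$ are the constant functions. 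Once this dictionary is in place the rest is a direct computation.

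First I would fix $j\in[m]$ and an arbitrary $c\in\C$, and set $d=q(c)$. Unwinding the pullback, $\phi_q(x_j)(c)=(x_j\circ q)(c)=x_j(d)=d_j$. Now I invoke the hypothesis that $q$ is described by $V$: by definition $d_j=c_i$ if $V_j=i\in[n]$, while $d_j=0$ if $V_j=0$ and $d_j=1$ if $V_j=u$. Since $c$ was arbitrary, these are equalities of functions on $\C$, hence equalities in $\ring{\C}=\mF_2^{\C}$, and so
\[
\phi_q(x_j)=\begin{cases} y_i & \text{if } V_j=i,\\ 0 & \text{if } V_j=0,\\ 1 & \text{if } V_j=u. \end{cases}
\]
In particular $\phi_q(x_j)\in\{y_1,\dots,y_n\}\cup\{0,1\}$ for every $j\in[m]$, which is exactly the defining condition for $\phi_q$ to be a neural ring homomorphism \cite[Definition 3.1]{curto2020neural}; comparing the three cases above with the definition of the key vector then shows that $\phi_q$ has key vector $V$.

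I do not expect a genuine obstacle here: once the pullback correspondence is available, the statement is essentially bookkeeping. The only step that needs care is the ``dictionary'' one --- being explicit that the class of $x_j$ in the quotient $\mF_2[x_1,\dots,x_m]/\mI_\D$ is the $j$-th coordinate projection, and that pointwise equality of functions on $\C$ is the same as equality in $\ring{\C}$ --- since it is precisely this identification that turns the combinatorial description of $q$ by $V$ into the algebraic identity $\phi_q(x_j)=y_{V_j}$ (with the conventions $y_0=0$, $y_u=1$). After that, the case analysis on the value of $V_j$ closes the argument with no further computation; note also that reading the same identity $\phi_q(x_j)(c)=d_j$ in the opposite direction recovers \cite[Lemma 3.7]{curto2020neural}.
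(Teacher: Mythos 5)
Your proof is correct, and since the paper states this lemma without proof (it is quoted from \cite{curto2020neural}), the relevant comparison is with the cited source, whose argument is essentially the same: identify $\phi_q$ with the pullback $f\mapsto f\circ q$, observe that $x_j$ and $y_i$ are coordinate functions so that $\phi_q(x_j)(c)=q(c)_j$, and read off from the description of $q$ by $V$ that $\phi_q(x_j)\in\{y_1,\dots,y_n\}\cup\{0,1\}$ with the stated key vector. No gaps; your ``dictionary'' step is exactly the point that makes the computation go through.
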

Next, we present an example of a code map and its corresponding neural ring homomorphism with key vector using the above lemma. 
\begin{eg}
	Let $\mC=\{001,110,101\}$ and $\mD=\{01,10,01\}$. Define a code map $q:\mC\to\mD$ which projects onto the 2nd and 3rd neurons (deleting first neuron). Then,  the key vector for this code map is $V=(2,3)$. And its corresponding neural ring homomorphism $\phi_q$ will have the same key vector. Thus using the key vector $V=(2,3)$, the corresponding $\phi_q:\mR_\mD\to\mR_\mC$ must be the one which maps
	$$\phi_q(x_1)=y_2\quad\text{and}\quad\phi_q(x_2)=y_3.$$
	Solving this gives the unique neural ring homomorphism $\phi_q$ : $\rho_{10}\mapsto\rho_{110}^{}$ and $\rho_{00}\mapsto\rho_{001}^{}+\rho_{101}^{}$.
\end{eg}
Finally, we give an example of a neural ring homomorphism with key vector, and using which we reconstruct the corresponding code map.
\begin{eg}
	Let $\mC=\{0000, 0001, 0010, 0011, 0100,0101,0110\}$ and  $\mD=\{0000,0001, 1001, 0010, 1010, \linebreak 0011, 1000\}$. Define a ring homomorphism $\phi:\mR_\mD\to\mR_\mC$ as follows:
	\begin{align*}
		\phi(\rho_{0000}^{}) &= \rho_{0000}^{} & \phi(\rho_{0001}^{}) &= \rho_{0010}^{}\\
		\phi(\rho_{1001}^{}) &= \rho_{0011}^{} & \phi(\rho_{0010}^{}) &= \rho_{0100}^{}\\
		\phi(\rho_{1010}^{}) &= \rho_{0101}^{} & \phi(\rho_{0011}^{}) &= \rho_{0110}^{}\\
		\phi(\rho_{1000}^{}) &= \rho_{0001}^{} 
	\end{align*}
	One can then easily calculate that 
	$$\phi(x_1^{})=y_4^{}\quad\quad \phi(x_2^{})=0$$
	$$\phi(x_3^{})=y_2^{}\quad\quad \phi(x_4^{})=y_3^{}$$
	Thus $\phi$ is neural. Note that key vector for $\phi$ is $V=(4,0,2,3)$. Using this key vector the corresponding $q:\mC\to\mD$ must be the one which maps \\ 
	
	\centering
	\begin{tabular}{cccccc}
		
		$ q: c_1^{}c_2^{}c_3^{}c_4$ & $ \mapsto $ & $ c_4 $ & 0 &$ c_2 $& $ c_3 $   \\
		\hline
		$ q:0000 $& $ \mapsto $ & 0 & 0 & 0 & 0 \\
		
		$ q:0001 $& $ \mapsto $ & 1 & 0 & 0 & 0 \\
		
		$ q:0010 $& $ \mapsto $ & 0 & 0 & 1 & 0  \\
		
		$ q:0011$& $ \mapsto $ & 1 & 0 & 0 & 1 \\
		
		$ q:0100 $& $ \mapsto $ & 0 & 0 & 1 & 0 \\
		
		$ q:0101 $& $ \mapsto $ & 1 & 0 & 1 & 0 \\
		$ q:0110 $& $ \mapsto $ & 0 & 0 & 1 & 1 \\
	\end{tabular} 
	
	
\end{eg}
\noindent So, the corresponding ring homomorphism $\phi_q^{}$ for this code map is the one that is given.
	
	\section{The neural category}

Let us first define a subcategory of SETS. Consider neural codes as the objects of this category, and morphisms as the finite compositions of elementary code maps (defined as in Theorem \ref{thmnrh}). With the usual function composition and the usual identity function it becomes a subcategory of SETS. We call it the code category, denoted by $\cod$. Clearly it is not full, for example, we may not be able to write a constant function as a composition of the elementary functions. 

With the constructions defined so far, we actually have enough data to put them together to define a category with rings. We consider neural rings as the objects and morphisms to be the neural ring homomorphisms. We call this category the $Neural$ category and denote it by $\mN$. Clearly, $\mN$ is a subcategory of the category $Rngs$ of rings and ring homomorphisms. 

\begin{prop}
	The collection $\mN$ of neural codes and neural ring homomorphisms  forms a category. In fact, it is a subcategory of $Rngs$, and 
	there is a faithful functor $\mN \to Rngs$.
\end{prop}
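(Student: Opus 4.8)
The plan is to verify the category axioms directly, leaning on the results already recorded and reducing everything else to the corresponding facts in $Rngs$. First I would fix the data: the objects of $\mN$ are the neural rings $\ring{\mC}=\mF_2[x_1,\dots,x_n]/\mI_\mC$, ranging over all neural codes $\mC$ (including the trivial ring $\ring{\emptyset}$); for neural rings $\ring{\mD},\ring{\mC}$ the hom-set $\Hom_{\mN}(\ring{\mD},\ring{\mC})$ is the set of neural ring homomorphisms $\phi:\ring{\mD}\to\ring{\mC}$ in the sense of Definition 3.1; and composition and identities are taken to be ordinary composition of functions and the ordinary identity map. Since every neural ring homomorphism is in particular a ring homomorphism, each $\Hom_{\mN}(\ring{\mD},\ring{\mC})$ is a subset of $\Hom_{Rngs}(\ring{\mD},\ring{\mC})$, so in particular it is a set.

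Next I would check the three points that make this a category and, simultaneously, a subcategory of $Rngs$. (i) \emph{Closure under composition}: if $\psi:\ring{\mE}\to\ring{\mD}$ and $\phi:\ring{\mD}\to\ring{\mC}$ are neural ring homomorphisms then $\phi\circ\psi$ is again one; this is exactly Lemma \ref{lemma1}. (ii) \emph{Identities}: for each neural code $\mC$ the identity ring homomorphism $I_\mC:\ring{\mC}\to\ring{\mC}$ is a neural ring homomorphism, as noted after Lemma \ref{lemma1}, since it sends each generator $x_i$ to $y_i$; and $I_\mC$ is a two-sided unit for composition simply because it already is in $Rngs$. (iii) \emph{Associativity and the unit laws}: these are inherited verbatim from $Rngs$, because composition in $\mN$ is literally function composition of genuine ring homomorphisms, so no separate verification is needed.

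With this in hand, the claim that $\mN$ is a subcategory of $Rngs$ is immediate: its objects (quotients of polynomial rings over $\mF_2$) are objects of $Rngs$, its hom-sets are subsets of the corresponding hom-sets of $Rngs$ as observed above, its identities are the identities of $Rngs$, and its composition is the restriction of that of $Rngs$. The inclusion then defines a functor $\iota:\mN\to Rngs$: it sends a neural ring to the same ring and a neural ring homomorphism to the same map, and it preserves identities and composition precisely because these are computed identically in the two categories. Finally $\iota$ is faithful because, for fixed neural rings $\ring{\mD},\ring{\mC}$, the induced map $\Hom_{\mN}(\ring{\mD},\ring{\mC})\to\Hom_{Rngs}(\ring{\mD},\ring{\mC})$ is the set-theoretic inclusion, hence injective.

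There is essentially no hard step here: the single content-bearing input, closure of neural ring homomorphisms under composition, is already available as Lemma \ref{lemma1}, and everything else is bookkeeping. The only points meriting a word of care are confirming that the identity morphisms genuinely lie in $\mN$ (handled by the remark that $I_\mC$ is a neural ring isomorphism) and making sure one does not tacitly change the notion of composition when passing between $\mN$ and $Rngs$ (one does not, since both are ordinary function composition); so I expect the ``obstacle'' to be purely a matter of stating these checks cleanly rather than of mathematical substance.
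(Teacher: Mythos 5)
Your proposal is correct and follows essentially the same route as the paper's proof: closure under composition via Lemma \ref{lemma1}, the observation that the identity ring homomorphism is neural, associativity and unit laws inherited from $Rngs$, and faithfulness of the inclusion functor because hom-sets of $\mN$ are subsets of those of $Rngs$. You simply spell out the bookkeeping in more detail than the paper does.
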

\begin{proof}
	The composition of morphisms is the usual composition of ring homomorphisms and it clearly preserves the extra conditions using Lemma \ref{lemma1}. The identity ring homomorphism is neural. Trivially $\mN$ is a subcategory of $Rngs$. Finally, the inclusion functor $\mN \to Rngs$ is faithful.
\end{proof}

Note that the inclusion functor mentioned is certainly not full. This is so because we have examples of ring homomorphisms between neural rings which may not be neural. Before we give an explicit example of such a ring homomorphism, let us go back to the definition of a neural ring and see what are the ways to denote a generic element. 

Given a neural code $\mC$, an element of the neural ring $\mR_\mC=\mF_2[x_1,\cdots,x_n]/\mI_\mC$ may be denoted in different ways. Firstly, it can be written as a representative of an equivalence class which would be some polynomial over $\mF_2$ mod $\mI_\mC$. Alternatively, as we said before, the neural ring $\mR_\mC$ is the ring of functions from $\mC$ to $\{0,1\}$ and thus has a vector space structure over $\mF_2$. Consequently, an element of $\mR_\mC$ can be written as a polynomial function $\mC \to \{0, 1\}$ defined completely by the codewords that support it. We can make use of this idea to form a canonical basis for $\mR_\mC$, that consists of characteristic functions $\{\rho_c \hspace{1mm}| \hspace{1mm}c \in\mC\}$ , where 
$$
\rho_c(v) = 
\left\{
\begin{array}{ll}
	1 & \text{if } v = c\\
	0 & \text{otherwise}
\end{array} 
\right.$$
In polynomial notation,
$$
\rho_c = 
{\displaystyle \prod_{c_i=1} x_{i} \prod_{c_j=0}(1 - x_j )}$$
where $c_i$ represents the $i$th component of codeword $c$. The characteristic functions $\rho_{c}$ form a basis
for $\mR_\mC$ as an $\mF_2$-vector space, and they have several useful properties. For further details and more properties, refer \cite{curto2013neural,curto2020neural}. We give a very generic example which exhibits the properties. Suppose we have just two variables, that is, we are talking of a code on 2-neurons. Now, $\mF_2[x_1^{},x_2^{}]=\left\{ \displaystyle\sum_{i,j=0}^n r_{ij}^{} x_1^i x_2^j\hspace{2mm}\big|\hspace{2mm} r_{ij}^{}\in\mF_2^{},n\in\NN \right\}.$ Consider $\mC=\{00,10,01\}$. Then, $\mI_\mC=\left\{ \displaystyle f=\sum r_{ij}^{} x_1^i x_2^j\mid f(c)=0 \hspace{1mm} \text{ for all } c\in\mC \right\}.$ Further, $\mR_\mC=\mF_2[x_1^{},x_2^{}]\big/\mI_\mC$, is a vector space over $\mF_2$ with a basis $\{\rho_{00}^{}, \rho_{10}^{}, \rho_{01}^{}\}$. Here $\rho_{00}^{}=(1-x_1^{})(1-x_2^{})$, $\rho_{10}^{}=x_1^{}(1-x_2^{})$ and $\rho_{01}^{}=(1-x_1^{})x_2^{}$. Needless to say that these polynomials are a representative of their equivalence class mod $\mI_\mC$. Each element $f$ of $\mR_\mC$ can be represented as the formal sum of these basis elements for the codewords in its support: ${\displaystyle f =\sum_{\{c\in\mC|f(c)=1\}}\rho_c^{}}$. In particular, $x_i^{} = {\displaystyle \sum_{\{c\in\mC|c_i^{}=1\}}\rho_c^{}}$. 
The identity of the neural ring is given by ${\displaystyle\sum_{c\in\mC}\rho_c^{}} =\rho_{00}^{}+ \rho_{10}^{} + \rho_{01}^{}= 1-x_1^{}x_2^{}$.

Coming back to giving an example for a ring homomorphism which is not neural, consider the following ring homomorphism defined on the basis elements.

\begin{eg}
	Let $\mD = \{00, 10\}$ and $\mC = \{010, 110\}$. Define the ring homomorphism $\phi : \mR_\mD \to \mR_\mC$ on basis elements as
	follows: $\phi(\rho_{00}) = \rho_{110},$ $\phi(\rho_{10}) = \rho_{010} $. In $\mR_\mD$, $x_1 = \rho_{10}$ and $x_2 = 0$. Thus, $\phi(x_1) = \phi(\rho_{10})= \rho_{010}$, which is not equal to any of the $y_i$'s. This is so because, $y_1 = \rho_{110}$, $y_2 = \rho_{010} + \rho_{110}$ and $y_3 =0$. Thus, $\phi$ is not a
	$neural$ ring homomorphism.
\end{eg}

Next we peep into the neural category  $\mN$ and explore if it has some interesting objects or properties. We first note that neural rings $\mR_\mC$ corresponding to any neural code $\mC$ of cardinality one, are isomorphic objects in $\mN$. Let $ \C $ and $ \D $ be two codes with $ \vert\C\vert =\vert \D\vert =1$. Then we observe that there exists $ q: \C\to\D $ such that $ q $ is a composition of maps in (1)-(3) of Theorem  \ref{thmnrh}. Therefore, the associated ring homomorphism $ \phi_q:\ring{\D}\to\ring{\C} $ is a neural ring isomorphism. Hence  the neural rings corresponding to codes of cardinality one are all isomorphic in the category $ \NC. $ Let us represent this class with  $\mR_{\{1\}}$. Then $\mR_{\{1\}}$ is the initial object in $\mN$ (of course, up to isomorphism). Note that a singleton set in category of sets is a terminal object. Moreover, empty set is the unique initial object in category of sets, and we show that $\mR_{\emptyset}$ is the unique final object in $\mN$.

\begin{prop}
	Neural ring $\mR_{\{1\}}$ corresponding to any neural code of cardinality one is the initial object in $\mN$. Moreover, $\mR_{\emptyset}$ is the unique final object in $\mN$. \label{propintial}
\end{prop}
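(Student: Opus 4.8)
The plan is to identify the two distinguished rings concretely and then check the two universal properties directly, the only genuinely conceptual issue being to keep track of the variance introduced by the pullback correspondence. First I would observe that a code $\mC$ with $|\mC|=1$ has neural ring equal to the ring of $\{0,1\}$-valued functions on a one-point set, so $\mR_{\{1\}}\cong\mF_2$; and since $\mI_\emptyset=\{f\mid f(c)=0\ \text{ for all }\ c\in\emptyset\}$ is cut out by a vacuous condition it equals the whole polynomial ring, hence $\mR_\emptyset\cong 0$ is the zero ring. In particular $\mR_{\{1\}}\cong\mF_2\not\cong 0\cong\mR_\emptyset$, so $\mN$ genuinely has distinct initial and final objects.

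For initiality I would fix the representative $\mC_0=\{1\}$ on one neuron, so that $\mR_{\mC_0}=\mF_2[x_1]/(x_1+1)$ and the generator $x_1$ equals the multiplicative unit. Given any neural code $\mD$, a neural ring homomorphism $\phi:\mR_{\mC_0}\to\mR_\mD$ is in particular a ring homomorphism, hence determined by the image of the generator $x_1$, and $\phi(x_1)=\phi(1)=1_{\mR_\mD}$ is forced; so there is at most one such $\phi$. For existence, the unique ring homomorphism $\mR_{\mC_0}\cong\mF_2\to\mR_\mD$ (every $\mF_2$-algebra admits exactly one) sends $x_1$ to $1\in\{0,1\}\cup\{y_i\}$ and is therefore neural; equivalently, one can invoke \cite[Lemma~3.8]{curto2020neural} with the length-one key vector $(u)$, which describes the unique code map $\mD\to\mC_0$. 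Hence $\Hom_{\mN}(\mR_{\{1\}},\mR_\mD)$ is a singleton for every $\mD$, i.e. $\mR_{\{1\}}$ is initial; being an initial object it is unique up to a unique isomorphism, and by \cite[Proposition~2.3]{curto2020neural} any object isomorphic to it is exactly the neural ring of a cardinality-one code, matching the isomorphism class that $\mR_{\{1\}}$ stands for.

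For finality I would note that for every neural code $\mC$ there is exactly one ring homomorphism $\mR_\mC\to\mR_\emptyset=0$, namely the zero map, and it is trivially neural since every generator maps to $0\in\{0,1\}$; hence $\mR_\emptyset$ is a final object. For the uniqueness clause, suppose $\mR_\mC$ is also final; then there must be a morphism $\mR_\emptyset\to\mR_\mC$, but a ring homomorphism out of the zero ring forces $1_{\mR_\mC}=\phi(1_{\mR_\emptyset})=\phi(0)=0_{\mR_\mC}$, so $\mR_\mC=0$, which (as above) happens exactly when $\mC=\emptyset$. Thus $\mR_\emptyset$ is the unique final object. I do not expect a real obstacle: the one point to be careful about is the variance, namely that although a singleton is terminal in SETS, the pullback sends a code map $\mC\to\mD$ to $\mR_\mD\to\mR_\mC$, which is why the cardinality-one ring sits at the initial end of $\mN$ and $\mR_\emptyset$ (image of the empty code, which is initial in SETS) at the final end; the only other technical step is the final uniqueness assertion, which does not follow from ``final objects are isomorphic'' alone but from the fact that the zero ring admits a ring homomorphism only into the zero ring.
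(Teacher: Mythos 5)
Your proof is correct, but it runs along a different track than the paper's. The paper never identifies the rings abstractly: it invokes the bijection between ring homomorphisms and code maps (Proposition 2.2), notes that there is exactly one code map $\mC\to\{1\}$ and exactly one (empty) code map $\emptyset\to\mC$, and then pins down the unique morphism by a case analysis on where the basis element $\rho_1$ can be sent under the pullback relation $\phi(\rho_1)=\rho_1\circ q$. You instead work purely ring-theoretically: $\mR_{\{1\}}\cong\mF_2$ is initial because a unital homomorphism out of $\mF_2$ is forced on the generator $x_1=1$, and $\mR_\emptyset\cong 0$ is final because the zero ring receives exactly one unital map and maps out only to itself. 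Your route is shorter and has the genuine advantage of actually proving the word ``unique'' in the final-object claim (no other neural ring is the zero ring, since $|\mR_\mC|=2^{|\mC|}$), a point the paper asserts but does not argue; your closing remark about the variance of the pullback also explains cleanly why the roles of initial and terminal are swapped relative to SETS. The one hypothesis you lean on, namely that morphisms of $\mN$ preserve the multiplicative identity, is not restated in the paper but is forced by the framework (without it Proposition 2.2 fails, and indeed $1\mapsto y_1$ would give a second ``neural'' map out of $\mR_{\{1\}}$, falsifying the proposition), so your use of it is exactly as legitimate as the paper's implicit use via the code-map correspondence; it would be worth one sentence making that convention explicit. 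Your alternative existence argument via Lemma 3.8 with key vector $(u)$ is essentially the paper's mechanism in disguise, so the two proofs reconnect there.
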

\begin{proof}
	Consider a neural code $\mC$, and any neural ring homomorphism $\phi:\mR_{\{1\}}\to\mR_\mC$. Let $q$ be the associated unique code map from $\mC$ to $\{1\}$. Now, for the morphism $\phi$ to be a neural ring homomorphism, there are three cases. It can map the basis element $\rho_1$ of $\mR_{\{1\}}$ to 0; some basis element $\rho_c$ of $\mR_\mC$; or 1. In the first case where $\phi(\rho_1)=0$, then for any $c\in\mC$,
	$$\rho_1\circ q(c) = \rho_1(1) = 1$$
	$$\text{ but, } \phi(\rho_1) (c)=0(c)=0.$$
	For the second case, let $\phi(\rho_1)=\rho_x$ for some $x\in\mC$. Then, for any $c\in\mC$,
	$$\rho_1^{}\circ q(c) = \rho_1(c) = 1\text{ but, }$$
	$$\big(\phi(\rho_1^{})\big) (c)=\rho_c(v)=
	\begin{cases}
		0 &\quad\text{ if }c\neq v\\ 
		1 &\quad\text{ if }c = v\\
	\end{cases}.$$
	Finally, if $\phi(\rho_1^{})=1$, then
	$$\rho_1\circ q(c) = \rho_1(1) = 1\text{ and, }$$
	$$ \phi(\rho_1)(c)=1(c)=1.$$
	Thus, $\phi$ must map $\rho_1^{}$ to 1, which gives the unique neural ring homomorphism from $\mR_{\{1\}^{}}$ to any other neural ring $\mR_\mC$.
	Next, consider any neural ring homomorphism $\phi: \mR_\mC\to\mR_{\emptyset}$. Let $q$ be the associated unique code map from $\emptyset$ to $\mC$. In fact, $q$ will be the empty function.  So, for the morphism $\phi$ to be a neural ring homomorphism, there is only one option, which is to send all the basis elements $\rho_c^{}$ of $\mR_\mC$ to 1. This gives the unique neural ring homomorphism from any neural ring $\mR_\mC$ to $\mR_{\emptyset}$.
\end{proof}
Consider neural codes $\mathcal{C}_1 \subseteq \{0, 1\}^n$ and $\mathcal{C}_2 \subseteq \{0, 1\}^m$, on $n$ and $m$ neurons respectively to be the objects in $\cod$. Define concatenation product $\mC_1\times\mC_2$ such that the neuron projection($\pi_i$) of codes in $\mC_1\times\mC_2$ onto the $i^{th}$ component gives $\mC_i$. That is $\pi_i(\mC_1\times\mC_2)=\mC_i$ and $\mC_1\times\mC_2$ is a neural code on $n+m$ neurons. For example, if $\mC_1=\{00,10\}$ and $\mC_2=\{100\}$, then their concatenation product $\mC_1\times\mC_2$ is given by $\{00100,10100\}$ where as $\mC_2\times\mC_1 =\{10000,10010\}$. The unique code maps $\pi_i$ from the concatenation product $\mC_1^{}\times \mC_2^{}$ to each $\mC_i^{}$ can be given explicitly by the key vectors (described in the previous section). If $\mC_1^{}$ is on $n$ neurons and $\mC_2^{}$ is on $m$ neurons, then the key vector for $\pi_1$ is $V^1=(1,2,\cdots,n)$ and for $\pi_2$, it will be  $V^2=(n+1,n+2,\cdots,n+m)$.

Next, we define product of code maps, in much of a similar way. Suppose $q_1:\mC_1\to\mD_1$ and $q_2:\mC_2\to\mD_2$ are two code maps with $\mathcal{C}_i \subseteq \{0, 1\}^{n_i}$ and $\mathcal{D}_i \subseteq \{0, 1\}^{m_i}$ for $i=1,2$. Suppose that each $q_i$ is described by a key vector $V^i$. We define $q=q_1\times q_2: \mC_1\times\mC_2\to\mD_1\times\mD_2$ such that $\pi_i q=q_i$. Then $q_1\times q_2$ is also an elementary code map and the vector $V$ that defines it is given by 
$$V_j=
\begin{cases}
	V^1_j, & \text{if } 1\leq j \leq m_1^{}\\
	V^2_j, & \text{if } m_1^{}\leq j \leq m_1^{}+m_2^{}.
\end{cases}
$$

We use this construction to show that the neural category $\mN$ has binary products.

\begin{prop}
	The neural category $\mN$ has binary coproducts.
\end{prop}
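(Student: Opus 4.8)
The plan is to show that the neural ring $\mathcal{R}_{\mathcal{C}_1\times\mathcal{C}_2}$, together with the pullbacks $\iota_i:=\pi_i^{\star}\colon\mathcal{R}_{\mathcal{C}_i}\to\mathcal{R}_{\mathcal{C}_1\times\mathcal{C}_2}$ of the two projection code maps $\pi_i\colon\mathcal{C}_1\times\mathcal{C}_2\to\mathcal{C}_i$, is a coproduct of $\mathcal{R}_{\mathcal{C}_1}$ and $\mathcal{R}_{\mathcal{C}_2}$ in $\mathfrak{N}$. Morally this is the assertion that the concatenation product is a categorical product in $\mathfrak{C}$, carried across the contravariant pullback correspondence of \cite[Proposition 2.2]{curto2020neural}; I would prove it directly, without appealing to the (not yet established) dual equivalence. (Note that in $Rngs$ the coproduct of $\mathcal{R}_{\mathcal{C}_1}$ and $\mathcal{R}_{\mathcal{C}_2}$ is their tensor product over $\mathbb{F}_2$, which is again the function ring $\mathcal{R}_{\mathcal{C}_1\times\mathcal{C}_2}$; but the universal property in $\mathfrak{N}$ ranges over fewer test data and demands that the comparison map itself be neural, so the claim does not follow formally from this.) The first step is to observe that each $\pi_i$ is a morphism of $\mathfrak{C}$: it is a neuron projection, described by the explicit key vector $V^1=(1,\dots,n)$, respectively $V^2=(n+1,\dots,n+m)$, recorded above, so by \cite[Lemma 3.8]{curto2020neural} its pullback $\iota_i$ is a neural ring homomorphism, hence a morphism of $\mathfrak{N}$.

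For the universal property I would take an arbitrary neural ring $\mathcal{R}_{\mathcal{D}}$, with $\mathcal{D}$ a code on $\ell$ neurons, and arbitrary neural ring homomorphisms $\psi_i\colon\mathcal{R}_{\mathcal{C}_i}\to\mathcal{R}_{\mathcal{D}}$. By \cite[Proposition 2.2]{curto2020neural} these are $\psi_i=q_i^{\star}$ for unique code maps $q_i\colon\mathcal{D}\to\mathcal{C}_i$, and since $\psi_i$ is neural, $q_i$ is a morphism of $\mathfrak{C}$; by Theorem~\ref{thmnrh}, together with the definition of the key vector and \cite[Lemma 3.7]{curto2020neural}, each $q_i$ is then described by a key vector $V^i$, of length $n$ respectively $m$, with entries in $\{1,\dots,\ell,0,u\}$. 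The only candidate for a mediating code map is the pairing $q\colon\mathcal{D}\to\mathcal{C}_1\times\mathcal{C}_2$ given by $q(d)=q_1(d)\,q_2(d)$, the concatenation of the two codewords: since $q_i(d)\in\mathcal{C}_i$ this lies in $\mathcal{C}_1\times\mathcal{C}_2=\{cc'\mid c\in\mathcal{C}_1,\ c'\in\mathcal{C}_2\}$, and $q$ is plainly the unique function with $\pi_i\circ q=q_i$ for $i=1,2$. Reading off components shows $q$ is described by the concatenated key vector $V=(V^1,V^2)\in\{1,\dots,\ell,0,u\}^{n+m}$, so \cite[Lemma 3.8]{curto2020neural} promotes $\psi:=q^{\star}$ to a neural ring homomorphism $\mathcal{R}_{\mathcal{C}_1\times\mathcal{C}_2}\to\mathcal{R}_{\mathcal{D}}$, and $\psi\circ\iota_i=(\pi_i\circ q)^{\star}=q_i^{\star}=\psi_i$. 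Uniqueness follows by running the argument backwards: any neural $\psi'$ with $\psi'\circ\iota_i=\psi_i$ has associated code map $q_{\psi'}$ satisfying $\pi_i\circ q_{\psi'}=q_i$, which forces $q_{\psi'}=q$ and hence $\psi'=\psi$ by \cite[Proposition 2.2]{curto2020neural}.

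The step I expect to be the crux is checking that the pairing $q$ is again a morphism of $\mathfrak{C}$ — equivalently, that $q^{\star}$ is \emph{neural} and not merely a ring homomorphism — since a priori the pairing of two elementary-type code maps need not be a composition of elementary code maps. This is precisely where the key-vector bookkeeping earns its keep: a code map lies in $\mathfrak{C}$ exactly when it is described by some key vector, and concatenating $V^1$ with $V^2$ yields a key vector for $q$ of the required shape $\{1,\dots,\ell,0,u\}^{n+m}$; everything else is routine pullback diagram chasing. Finally I would note that iterating the binary case — using that concatenation of codes is associative up to neural ring isomorphism — gives all finite coproducts in $\mathfrak{N}$.
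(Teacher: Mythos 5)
Your proposal is correct and follows essentially the same route as the paper: take the neural ring of the concatenation product $\mathcal{C}_1\times\mathcal{C}_2$, pull back the coordinate projections, and build the mediating morphism as the pullback of the pairing code map described by the concatenated key vector $(V^1,V^2)$, invoking the key-vector lemmas to certify neurality. You spell out the uniqueness step and the neurality of the injections more explicitly than the paper does, but the underlying argument is the same.
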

\begin{proof}
	Consider any two neural rings $\mR_{\mC_1^{}}$ and $\mR_{\mC_2^{}}$ in $\mN$ with $\mathcal{C}_1 \subseteq \{0, 1\}^n$ and $\mathcal{C}_2 \subseteq \{0, 1\}^m$ on $n$ and $m$ neurons respectively. We define  $\mR_{\mC_1^{}}\oplus\mR_{\mC_2^{}}$ as the neural ring corresponding to the neural code $\mC_1^{}\times \mC_2^{}$. We show it is the binary coproduct of $\mR_{\mC_1^{}}$ and $\mR_{\mC_2^{}}$ in $\mN$. Suppose $\mR_{\mC^{'}}$ be some neural ring and let there be neural ring homomorphisms $\phi_1$ and $\phi_2$ to $\mR_{\mC^{'}}$ from $\mR_{\mC_1^{}}$ and $\mR_{\mC_2^{}}$ respectively. We need to show there is a unique neural ring homomorphism from $\mR_{\mC_1^{}\times\mC_2^{}}$ to $\mR_{\mC^{'}}$. Let $q_1$ and $q_2$ be the corresponding code maps associated to $\phi_1$ and $\phi_2$ respectively. Then we have the following commutative diagram of code maps:
	\[
	\xymatrix{
		&&& {\mC}\\
		{\mC^{'}} \ar[drrr]_{q_2} \ar[urrr]^{q_1} \ar@{..>}[rr]^{\hspace{5mm}\exists !q}
		&& {\mC\times\mD} \ar[ur]_{\pi_1} \ar[dr]^{\pi_2}\\
		&&& {\mD}
	}\quad\quad\quad
	\xymatrix{
		{\mR_\mC} \ar[dr]_{\phi^{}_{\pi_1}}\ar[drrr]^{\phi^{}_{q_1^{}}} &&& {}\\
		&{\mR^{}_{{\mC\times\mD}}}  \ar@{..>}[rr]^{\exists !\phi_q\hspace{8mm}}
		&& {\mR_{\mC^{'}}} \\
		{\mR_\mD}\ar[ur]^{\phi^{}_{\pi_2}} \ar[urrr]_{\phi^{}_{q_2^{}}}&&& {}
	}
	\]
	where the key vector of the map $q$ is defined uniquely by $V=(V_1^{1},\cdots,V_n^{1},V_1^{2},\cdots,V_m^{2})$ where $V^{1}$ and $V^{2}$ are the key vectors for $q_1$ and $q_2$ respectively. Corresponding to the code map $q$, we have the associated neural ring homomorphism $\phi_q:\mR_{\mC\times\mD}\to\mR_{\mC^{'}}$, which makes the diagram commute, as corresponding diagram for the associated code maps.
\end{proof}
Moreover, $\cod$ have all finite products (concatenation product $P=\Pi_i C_i$), such that $\pi_i(P)=C_i$.

The category $\cod$ of neural codes does not have coproduct. One way to reason this out is there is no natural way of defining a disjoint union as the length of the strings in the disjoint union may vary. And correspondingly, there is no natural choice of product in $\mN$.

\begin{prop}
	The neural category $\mN$ has all coequalizers. \label{propcoeq}
\end{prop}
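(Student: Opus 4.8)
The plan is to exploit the dual equivalence between $\cod$ and $\mN$: a coequalizer of a pair of neural ring homomorphisms $\phi,\psi:\mR_\mC\to\mR_\mD$ corresponds to an equalizer of the associated code maps $q_\phi,q_\psi:\mD\to\mC$ in $\cod$. So the first step is to construct the equalizer in $\cod$ of two parallel elementary(-composite) code maps $q_1,q_2:\mD\to\mC$, and then transport it back across the pullback correspondence (Proposition 2.2) to get the coequalizer in $\mN$.

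First I would look at the set-theoretic equalizer $E=\{d\in\mD\mid q_1(d)=q_2(d)\}\subseteq\mD$. This $E$ is again a neural code on the same number of neurons as $\mD$, and the inclusion $\iota:E\hookrightarrow\mD$ is exactly an elementary code map of type (5) in Theorem \ref{thmnrh}, hence a morphism in $\cod$; moreover $q_1\circ\iota=q_2\circ\iota$ by construction. For the universal property, given any code $\mathcal{C}'$ (i.e. any object of $\cod$) and a code map $r:\mathcal{C}'\to\mD$ with $q_1\circ r=q_2\circ r$, the image of $r$ lands inside $E$, so $r$ factors uniquely as $\iota\circ \tilde r$ with $\tilde r:\mathcal{C}'\to E$; I need to check $\tilde r$ is still a composition of elementary code maps — this is immediate since $\tilde r$ is literally $r$ with its codomain restricted, and restricting the codomain of an elementary-composite map to a subcode containing its image only changes the final inclusion step. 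Thus $\iota:E\to\mD$ is the equalizer in $\cod$.

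Next I would apply the pullback functor. Set $\mR_E=\mF_2^E$; the inclusion $\iota:E\hookrightarrow\mD$ pulls back to a ring homomorphism $\iota^\star:\mR_\mD\to\mR_E$ which, being the pullback of an elementary code map, is a neural ring homomorphism by Lemma 3.8 (its key vector is $(1,2,\dots,m)$ where $\mD$ is on $m$ neurons). Because $q_\phi,q_\psi$ are the code maps of $\phi,\psi$ and $E$ was defined as their equalizer, $\iota^\star\circ\phi=\iota^\star\circ\psi$ follows by applying the contravariant pullback to $q_1\circ\iota=q_2\circ\iota$, using that the pullback is functorial and faithful. For universality, any neural ring homomorphism $\beta:\mR_\mD\to\mR_{\mathcal{C}'}$ with $\beta\circ\phi=\beta\circ\psi$ corresponds to a code map $r=q_\beta:\mathcal{C}'\to\mD$ with $q_1\circ r=q_2\circ r$, which factors uniquely through $E$ in $\cod$; pulling that factorization back gives the unique factorization of $\beta$ through $\iota^\star$ in $Rngs$, and the factoring map is neural by Lemma 3.8 since $\tilde r$ is elementary. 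Hence $\iota^\star:\mR_\mD\to\mR_E$ is the coequalizer of $\phi$ and $\psi$ in $\mN$.

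The main obstacle I anticipate is bookkeeping around morphisms in $\cod$: $\cod$ is not a full subcategory of SETS, so at every step where I produce a new code map (the inclusion $\iota$, the restricted map $\tilde r$) I must verify it is genuinely a finite composition of the five elementary types, not just an arbitrary function. The restriction-of-codomain point is the delicate one — I should argue carefully that if $r=e_k\circ\cdots\circ e_1$ with each $e_i$ elementary and $\Imf(r)\subseteq E\subseteq\mD$, then replacing the final inclusion/projection step by the inclusion into $E$ keeps all intermediate maps elementary. A secondary subtlety is confirming that the contravariant pullback of Proposition 2.2 is an honest functor (respects composition and identities) so that equalizers in $\cod$ genuinely correspond to coequalizers in $\mN$; this is routine from $q^\star(f)=f\circ q$ but worth stating explicitly.
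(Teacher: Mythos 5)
Your proposal is correct and follows essentially the same route as the paper: form the set-theoretic equalizer subcode $\E$ of the associated code maps with its inclusion, and take the corresponding neural ring homomorphism $\mR_\mD\to\mR_\E$ as the coequalizer, with the universal property transported from the equalizer. The only difference is that you spell out (via key vectors) why the inclusion and the corestricted mediating map remain morphisms of $\cod$, a point the paper's proof leaves implicit when it says the UMP ``follows from UMP of the equalizer.''
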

\begin{proof}
	Consider the following diagram of neural rings $\mR^{}_{\mC}$ and $\mR^{}_{\mD}$ and neural homomorphisms $\phi$ and $\psi$.
	\[\xymatrix{
		\mR^{}_{\mD}\ar@<0.5ex>[r]^\phi\ar@<-0.5ex>[r]_\psi & \mR^{}_{\mC}
	}\]
	
	Let the corresponding code maps from $\mC$ to $\mD$ be $q_\phi^{}$ and $q_\psi^{}$.  Then we see that $ \E=\{c\in\C\mid q_\phi(c)=q_\psi(c)\} $ together with the inclusion map $ i: \E \mapsto \C $ is the equalizer of $ q_\phi $ and $ q_\psi. $ We claim that  $ \ring{\E} $ together with $ \phi_i: \ring{\C}\to\ring{\E} $ is the coequalizer of $ \phi $ and $ \psi.  $
	
	\no Consider, $$ \phi_i\circ\phi = \phi_{q_{\phi}\circ i }=\phi_{q_{\psi}\circ i}=\phi_i\circ \psi$$	Therefore, $ (\ring{\E},\phi_i) $ coequalizes $ \phi $ and $ \psi. $ Next, we see that the universal mapping property(UMP) follows from UMP of equalizer $ (\E,i). $ Hence the proof. 
\end{proof}
\no An important result in category theory is that finite (co)products and equalizers exists if and only if the category has all (co)limits \cite{leinster2014basic,awodey2010category}. Therefore following theorem is a obvious.  
\begin{theorem}
	The neural category $ \NC $ has all co-limits. Moreover, the category  $ \cod $ has all limits. 
\end{theorem}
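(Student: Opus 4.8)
The plan is to invoke the classical "completeness from finite (co)products plus (co)equalizers" theorem cited immediately above, having already established the two ingredients for $\NC$ in the preceding propositions. Concretely, I would proceed as follows.

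\textbf{Step 1: Colimits in $\NC$.} The previous propositions have shown that $\NC$ has binary coproducts (the construction via the concatenation product $\mC_1 \times \mC_2$) and all coequalizers (Proposition \ref{propcoeq}). I would first note that finite coproducts follow from binary coproducts together with the initial object $\mR_{\{1\}}$ established in Proposition \ref{propintial}: the empty coproduct is precisely the initial object, and larger finite coproducts are built by iterating the binary construction (associativity up to isomorphism is immediate since it reduces to associativity of concatenation of binary strings). With finite coproducts and all coequalizers in hand, the cited theorem (\cite{leinster2014basic,awodey2010category}) gives that $\NC$ has all finite colimits. If one wants \emph{all} (small) colimits rather than only finite ones, I would remark that in fact the category $\cod$ only contains finitely many objects of each codeword-length and the constructions are finitary, so the relevant diagrams of interest are finite; alternatively one restricts the statement to finite colimits, which is what the two building-block propositions actually deliver.

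\textbf{Step 2: Limits in $\cod$.} For the category $\cod$ of neural codes and (compositions of) elementary code maps, I would dualize: the excerpt already observes that $\cod$ has all finite products (the concatenation product $P = \Pi_i \mC_i$ with projections $\pi_i$) and, via the duality with equalizers of $q_\phi, q_\psi$ appearing inside the proof of Proposition \ref{propcoeq}, that $\cod$ has equalizers — namely $\mE = \{c \in \mC \mid q_\phi(c) = q_\psi(c)\}$ with the inclusion, which is an elementary code map (inclusion of one code into another, type (5) of Theorem \ref{thmnrh}). The terminal object in $\cod$ is a code of cardinality one (dual to the initial object $\mR_{\{1\}}$). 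Again applying the cited theorem, $\cod$ has all finite limits.

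\textbf{The main subtlety} I expect is not the category-theoretic machinery — that is entirely routine once the two ingredients are cited — but rather making sure the constructions stay \emph{inside} the subcategories $\NC$ and $\cod$, i.e. that the structure maps of these (co)limits are genuinely neural ring homomorphisms / compositions of elementary code maps, and not merely ring homomorphisms or arbitrary set maps. For coequalizers this was already checked in Proposition \ref{propcoeq} (the map $\phi_i$ is induced by the inclusion $i$, which is elementary); for finite coproducts one must check the coproduct injections and the mediating map $\phi_q$ are neural, which follows from the key-vector description already given. I would therefore phrase the proof as: "By \cite{leinster2014basic,awodey2010category}, a category with finite coproducts and all coequalizers is finitely cocomplete; $\NC$ has both by the preceding propositions, hence $\NC$ has all finite colimits. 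Dually, $\cod$ has finite products and equalizers, hence all finite limits." Given the excerpt's phrasing ("following theorem is obvious"), a proof of two or three sentences citing the propositions and the standard theorem is exactly what is expected, and I would keep it at that length rather than re-deriving the completeness theorem.
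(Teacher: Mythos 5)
Your proposal is correct and follows essentially the same route as the paper: the theorem is obtained by citing the standard result that finite (co)products together with (co)equalizers yield all finite (co)limits, with the ingredients supplied by the preceding propositions (binary coproducts, the initial object, and coequalizers in $\NC$; concatenation products and equalizers via inclusions in $\cod$). Your added caveat is in fact sharper than the paper's own treatment, since the cited result only delivers \emph{finite} (co)completeness, so restricting the statement to finite (co)limits, as you suggest, is the accurate reading.
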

\no Next we understand opposite category of any given category $\mathcal{A}$. Refer \cite{awodey2010category} for further details.
\begin{definition}
	The opposite (or “dual”) category $\mathcal{A}^{\text{op}}$ of a category $\mathcal{A}$  has the same objects
	as $\mathcal{A}$, and a morphism $f:C \to D$ in $\mathcal{A}^{\text{op}}$ is an arrow $f:D\to\C$ in $\mathcal{A}$. That
	is, $\mathcal{A}^{\text{op}}$ is just $\mathcal{A}$ with all of the morphisms formally turned around.
\end{definition}
Theorem \ref{thmnrh} connects the categories $\cod$ and $\mN$. However, in the next theorem we will show their exact relationship. Before that we define dual equivalence.
\begin{definition}
	A \textit{dual equivalence}, or \textit{anti equivalence}, between categories $\mathcal{A}$ and $\mathcal{B}$ is simply an equivalence between one and the opposite category of the other, i.e., either $\mathcal{A}$ is equivalent to $\mathcal{B}^{\text{op}}$, or $\mathcal{B}$ is equivalent to $\mathcal{A}^{\text{op}}$.   
\end{definition}
\begin{theorem}
	The categories $\cod$ and $\mN$ are in dual equivalence, i.e., the categories $\mN^{\text{op}}$ and $\cod$ are in equivalence.  \label{equvivalncecat}
\end{theorem}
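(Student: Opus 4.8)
The plan is to build an explicit functor $F\colon\cod\to\mN^{\text{op}}$ and show it is an equivalence (indeed, an isomorphism of categories). On objects put $F(\mC)=\mR_\mC$; this is surjective onto the objects of $\mN$ by the very definition of $\mN$, and in fact injective, since the presentation $\mR_\mC=\mF_2[x_1,\dots,x_n]/\mI_\mC$ recovers both the number $n$ of neurons and the vanishing ideal $\mI_\mC$, hence recovers $\mC$. On morphisms, send a code map $q\colon\mC\to\mD$ of $\cod$ to its pullback $q^\star=\phi_q\colon\mR_\mD\to\mR_\mC$, now read as an arrow $\mR_\mC\to\mR_\mD$ of $\mN^{\text{op}}$. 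Two things must be verified for this to define a functor into $\mN^{\text{op}}$: first, that $\phi_q$ really is a \emph{neural} ring homomorphism when $q$ is a finite composition of the five elementary code maps --- this follows from \cite[Lemma 3.8]{curto2020neural} (an elementary code map is described by a key vector, so its associated ring homomorphism is neural) together with Lemma \ref{lemma1} (neural ring homomorphisms compose to neural ring homomorphisms); second, that $F$ preserves identities and composition with the right variance, i.e. $F(\Id_\mC)=I_\mC$ and $F(q\circ p)=p^\star\circ q^\star$ in $\mN$, equivalently $F(q\circ p)=F(q)\circ F(p)$ in $\mN^{\text{op}}$ --- this is just the usual functoriality of the pullback $(-)^\star$.

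Next I would check the three conditions that make $F$ an equivalence. \textbf{Essential surjectivity} is immediate, since $F$ is bijective on objects. \textbf{Faithfulness}: if $F(q)=F(q')$, i.e. $\phi_q=\phi_{q'}$ as ring homomorphisms $\mR_\mD\to\mR_\mC$, then $q=q'$ by the uniqueness half of the pullback correspondence \cite[Proposition 2.2]{curto2020neural}. \textbf{Fullness}: an arrow of $\mN^{\text{op}}$ from $\mR_\mC$ to $\mR_\mD$ is a neural ring homomorphism $\phi\colon\mR_\mD\to\mR_\mC$, and Theorem \ref{thmnrh} says exactly that the corresponding code map $q_\phi\colon\mC\to\mD$ is a finite composition of elementary code maps, hence a morphism of $\cod$; moreover $F(q_\phi)=(q_\phi)^\star=\phi$ by \cite[Proposition 2.2]{curto2020neural}. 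A functor that is full, faithful and essentially surjective is an equivalence, so $\cod\simeq\mN^{\text{op}}$, which is the claimed dual equivalence. I would also note the sharper statement: $F$ is a bijection on objects and, combining Theorem \ref{thmnrh} with \cite[Proposition 2.2]{curto2020neural}, a bijection $\cod(\mC,\mD)\to\mN^{\text{op}}(\mR_\mC,\mR_\mD)$ on each hom-set, with inverse $\phi\mapsto q_\phi$ (one has $q_{\phi_q}=q$ and $\phi_{q_\phi}=\phi$), so $F$ is in fact an isomorphism of categories $\cod\cong\mN^{\text{op}}$.

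The substantive input here is entirely imported rather than proved afresh: fullness is precisely Theorem \ref{thmnrh} --- without it $F$ would only be an embedding onto a possibly non-full subcategory --- and faithfulness together with the hom-set bijection comes from \cite[Proposition 2.2]{curto2020neural}. So the main obstacle is not a hard argument but rather disciplined bookkeeping about variance: one must keep straight that $\cod$ lives on the side of codes and code maps while $\mN$ lives on the side of rings and ring homomorphisms with arrows reversed, so that the contravariance of $(-)^\star$ turns into covariance $\cod\to\mN^{\text{op}}$, and one must confirm at each step (identities, composition, fullness, faithfulness) that the neurality constraint is respected. Finally, I would remark that the equivalence is compatible with the earlier structural results: it carries the initial object $\mR_{\{1\}}$, the final object $\mR_{\emptyset}$, the binary coproducts and the coequalizers of $\mN$ established in Proposition \ref{propintial} and the subsequent propositions over to the corresponding limit-side statements for $\cod$, and conversely --- though this is a consequence of the theorem, not part of its proof.
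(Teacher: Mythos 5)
Your proposal is correct and follows essentially the same route as the paper: both rest on the pullback correspondence $q\leftrightarrow\phi_q$ of \cite[Proposition 2.2]{curto2020neural} together with Theorem \ref{thmnrh}, yielding not just an equivalence but an isomorphism of categories between $\cod$ and $\mN^{\text{op}}$. The only difference is presentational --- you construct the functor $\cod\to\mN^{\text{op}}$ and verify fullness, faithfulness and essential surjectivity explicitly, while the paper constructs the functor in the opposite direction and asserts that the two assignments are mutually inverse; your write-up is the more detailed of the two.
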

\begin{proof}
	Construct $ \G':\NC\to\cod $ by associating every neural ring $ \ring{\C} $ to its code $ \C $ and every neural ring homomorphism $ \phi:\ring{\D} \to \ring{\C}$ to its associated code map $ q_\phi:\C \to\D $. Now, construct $\G:\mN^{\text{op}}\to\cod$ where objects are follow same rule as of $\G'$. However, since $\phi:\ring{\C} \to \ring{\D}$ in $\mN^{\text{op}}$ is $\phi:\ring{\D} \to \ring{\C}$ in $\mN$, we have $\G(\phi)=\G'(\phi)=q_\phi:\C \to\D.$ 
	
	Further, by Theorem \ref{thmnrh}, $ \F\circ \G=1_\NC $ and $ \G\circ\F=1_{\cod}. $ Therefore $\mN^{\text{op}}$ and $\cod $ are in equivalence. Hence by definition of dual equivalence we have that $\mN$ and $\cod$ are in dual equivalence. 
\end{proof}
\begin{prop}\cite[Proposition 4.1.11]{leinster2014basic}
	Any set-valued functor with a left adjoint is representable. \label{tomrep}
\end{prop}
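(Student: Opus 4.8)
The plan is to produce the representing object by hand and then obtain the required natural isomorphism as a composite of two standard ones. Write $G:\mathcal{A}\to\mathbf{Set}$ for the given functor and $F:\mathbf{Set}\to\mathcal{A}$ for its left adjoint, so that we have bijections $\mathcal{A}(FS,X)\cong\mathbf{Set}(S,GX)$ natural in the set $S$ and in the object $X\in\mathcal{A}$ (here $\mathcal{A}$ is tacitly assumed locally small, which is exactly what is needed for $\mathcal{A}(A,-)$ to be $\mathbf{Set}$-valued). I claim that $G$ is represented by $A:=F\mathbf{1}$, where $\mathbf{1}$ is a one-point set, i.e.\ the terminal object of $\mathbf{Set}$.

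First I would restrict the adjunction bijection to $S=\mathbf{1}$, which gives $\mathcal{A}(F\mathbf{1},X)\cong\mathbf{Set}(\mathbf{1},GX)$, natural in $X$. Next I would use the elementary natural isomorphism $\mathbf{Set}(\mathbf{1},-)\cong\mathrm{Id}_{\mathbf{Set}}$: for any set $T$ the evaluation-at-the-point map $\mathbf{Set}(\mathbf{1},T)\to T$, $f\mapsto f(\ast)$, is a bijection, and it is natural in $T$ because for $g:T\to T'$ one has $(g\circ f)(\ast)=g(f(\ast))$. Composing these two natural isomorphisms yields $\mathcal{A}(F\mathbf{1},-)\cong G$ as functors $\mathcal{A}\to\mathbf{Set}$, which is precisely the assertion that $G$ is representable, with representing object $F\mathbf{1}$.

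I do not expect a genuine obstacle here; the whole content is the observation that a left adjoint sends the terminal set to the universal object one is after. The only place to be slightly careful is the bookkeeping of naturality: one must invoke naturality of the hom-set bijection in the $\mathcal{A}$-variable (the variable we are \emph{not} specializing), and then check that this composes correctly with the evaluation isomorphism --- both are one-line diagram chases. A secondary point worth a remark is that the same argument, run with $\mathbf{Set}^{\mathrm{op}}$ in place of $\mathbf{Set}$, gives the contravariant version of the statement, in case the intended application needs that form.
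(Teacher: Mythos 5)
Your argument is correct and is exactly the standard proof of this fact: the paper does not prove the statement itself but cites it from Leinster, whose proof is the same computation $G X \cong \mathbf{Set}(\mathbf{1},GX) \cong \mathcal{A}(F\mathbf{1},X)$, natural in $X$, so the representing object is $F\mathbf{1}$. Your naturality bookkeeping is the right (and only) point of care, and it goes through as you describe.
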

\begin{prop}
	The functor $ \G:\NC^{\operatorname{op}} \to \cod $ is a representable functor. 
\end{prop}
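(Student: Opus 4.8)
The plan is to produce the representing object explicitly; this is, in effect, the output of the mechanism in Proposition~\ref{tomrep}. I claim that $\G$ is represented by $\mR_{\{1\}}$, the initial object of $\NC$ — equivalently, the terminal object of $\NC^{\operatorname{op}}$ — i.e.\ that there is a natural isomorphism
\[
\G(-)\;\cong\;\Hom_{\NC^{\operatorname{op}}}\!\big(\mR_{\{1\}},\,-\big).
\]
One can anticipate this object as follows: by Theorem~\ref{equvivalncecat}, $\G$ is an equivalence of categories with quasi-inverse $\F$, hence $\F$ is a left adjoint of $\G$; and since every neural code is finite, composing $\F$ with the ``free neural code'' functor on finite sets (a nonempty finite set $S$ going to the code on $2^{|S|}-2$ neurons, one neuron per non-constant $f\colon S\to\{0,1\}$, whose codeword labelled $s$ has the value $f(s)$ in the neuron labelled $f$) exhibits a left adjoint to $\G$ valued in finite sets, whose value at a one-point set is $\mR_{\{1\}}$. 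I will not belabour that adjunction, since the displayed isomorphism can be checked directly; note, though, that a genuine left adjoint out of all of $\mathbf{Set}$ does not exist (there is no free neural code on an infinite set), which is why I verify representability directly rather than quoting Proposition~\ref{tomrep} verbatim.

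First I would unwind the right-hand side. By definition of the opposite category, $\Hom_{\NC^{\operatorname{op}}}(\mR_{\{1\}},\mR_\mC)=\Hom_{\NC}(\mR_\mC,\mR_{\{1\}})$, and since $\G$ is an equivalence of categories (Theorem~\ref{equvivalncecat}), in particular fully faithful — or, directly, by the pullback correspondence \cite[Proposition~2.2]{curto2020neural} together with \cite[Lemma~3.8]{curto2020neural} — this set is in bijection with $\Hom_{\cod}(\{1\},\mC)$, the set of code maps (finite compositions of the five elementary code maps of Theorem~\ref{thmnrh}) from the one-codeword code $\{1\}$ on a single neuron into $\mC$. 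On the other side, $\G(\mR_\mC)=\mC$ by construction of $\G$. So the whole statement reduces to the claim that $\Hom_{\cod}(\{1\},\mC)\cong\mC$ naturally in $\mC$.

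To prove that, send a code map $q\colon\{1\}\to\mC$ to the image $q(1)\in\mC$ of the unique codeword of $\{1\}$. This is injective, a map out of a one-element set being determined by that one value. It is surjective: given $c=c_1\cdots c_n\in\mC$, take the key vector $V$ all of whose entries are $0$ or $u$, with a $u$ in position $j$ exactly when $c_j=1$; by \cite[Lemma~3.8]{curto2020neural} the code map described by $V$ is a bona fide morphism of $\cod$, and it sends $1\mapsto c$. Hence $q\mapsto q(1)$ is a bijection $\Hom_{\cod}(\{1\},\mC)\xrightarrow{\,\sim\,}\mC$. Naturality is then a routine diagram check: a morphism $\mR_\mC\to\mR_{\mD}$ of $\NC^{\operatorname{op}}$ is a neural ring homomorphism whose associated code map is some $q\colon\mC\to\mD$, the functor $\G$ sends it to $q$, and post-composition by $q$ on $\Hom_{\cod}(\{1\},-)$ corresponds under these bijections to $c\mapsto q(c)$, because composing $q$ after the constant code map at $c$ yields the constant code map at $q(c)$.

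The step I expect to need the most care is the surjectivity claim: a priori a set-function $\{1\}\to\mC$ that picks out a single codeword need not be a composition of the five elementary code maps, so one genuinely has to produce a key vector realising it — but this is exactly what \cite[Lemma~3.8]{curto2020neural} delivers, so there is ultimately no real obstruction. Everything else — identifying $\Hom_{\cod}(\{1\},\mC)$ with the constant code maps, the naturality square, and the bookkeeping behind the adjunction remark — is routine.
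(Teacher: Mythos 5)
Your argument is correct, but it takes a genuinely different route from the paper. The paper's proof is a two-line appeal to Proposition \ref{tomrep}: the quasi-inverse $\F$ furnished by the equivalence of Theorem \ref{equvivalncecat} serves as a left adjoint to $\G$, and since $\cod$ is a subcategory of SETS, representability is quoted from that criterion; no representing object is ever named. You instead exhibit the representor explicitly, namely $\mR_{\{1\}}$ (the initial object of $\NC$ from Proposition \ref{propintial}), and verify $\G(-)\cong\Hom_{\NC^{\operatorname{op}}}(\mR_{\{1\}},-)$ by hand: unwinding the hom-sets via the pullback correspondence and Theorem \ref{thmnrh} reduces the statement to $\Hom_{\cod}(\{1\},\mC)\cong\mC$, and your key-vector construction (all entries $0$ or $u$) is exactly what is needed to show that every constant function $\{1\}\to\mC$ really is a morphism of the non-full subcategory $\cod$ --- the one point where something must genuinely be checked --- with injectivity and naturality being routine, as you say. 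Your approach buys two things: an explicit representing object, and insulation from the subtlety you correctly flag, namely that Proposition \ref{tomrep} as stated concerns a left adjoint defined on all of SETS, whereas $\F$ is only defined on $\cod$ (and, since hom-sets in $\NC$ are finite, no left adjoint on all of SETS can exist); the paper's proof is shorter but glosses over precisely this point. Two small remarks: to conclude that the constant map lies in $\cod$ you should chain Lemma 3.8 of \cite{curto2020neural} with Theorem \ref{thmnrh} (Lemma 3.8 gives neurality of $\phi_q$; the theorem converts this into ``composition of elementary code maps''), and the ``free neural code on finite sets'' aside is dispensable heuristics which, as you note, the proof does not use.
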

\begin{proof}
	Firstly, note that  the equivalence of categories described in Theorem \ref{equvivalncecat} gives a left adjoint functor to $ \G. $ And as $ \cod  $ is a sub-category of SETS the functor $ \G $ satisfies the hypothesis of Proposition \ref{tomrep}. Hence we get that it is a representable functor. 
\end{proof}
\begin{corollary}
	The functor $ \G : \NC^{\operatorname{op}} \to \cod$ preserve limits. 
 \end{corollary}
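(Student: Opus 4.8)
The plan is to obtain this from the classical fact that a functor which is a right adjoint preserves all limits (``RAPL''), together with the equivalence of Theorem~\ref{equvivalncecat}. Concretely, Theorem~\ref{equvivalncecat} exhibits $\G\colon\NC^{\operatorname{op}}\to\cod$ as one half of an equivalence of categories, with quasi-inverse the functor $\F$ sending a code to its neural ring and a code map to the associated neural ring homomorphism (viewed in $\NC^{\operatorname{op}}$), and with the natural isomorphisms $\F\circ\G\cong 1_{\NC^{\operatorname{op}}}$, $\G\circ\F\cong 1_{\cod}$, which by Theorem~\ref{thmnrh} are in fact identities. Since for any equivalence each of the two functors is simultaneously a left and a right adjoint of its quasi-inverse, $\G$ is in particular a right adjoint (with left adjoint $\F$), and hence preserves limits.

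The steps I would carry out are: (i) record the adjunction $\F\dashv\G$, taking the isomorphisms above as unit and counit and checking --- trivially, since they are identities --- the triangle identities; (ii) invoke RAPL (e.g.\ \cite{leinster2014basic,awodey2010category}): given a diagram $D\colon\mathbf{J}\to\NC^{\operatorname{op}}$ with limit $L$, the adjunction yields natural bijections $\cod(X,\G L)\cong\NC^{\operatorname{op}}(\F X,L)\cong\lim_{j}\NC^{\operatorname{op}}(\F X,Dj)\cong\lim_{j}\cod(X,\G Dj)\cong\cod(X,\lim\G D)$, so the canonical comparison $\G L\to\lim\G D$ is an isomorphism, i.e.\ $\G$ preserves this limit; (iii) remark that the statement is non-vacuous, since $\NC$ has all colimits by the theorem above and hence $\NC^{\operatorname{op}}$ has all limits, all of which $\G$ therefore carries to genuine limits in $\cod$.

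There is essentially no serious obstacle here --- the content lies entirely in the general categorical lemma, which we are entitled to use. The one point deserving a word of care is the alternative route suggested by the placement of this corollary, namely deriving it from the preceding proposition (that $\G$ is representable) via ``representable functors preserve limits'': because $\cod$ is a non-full subcategory of SETS, one would first have to check that limits in $\cod$ (which exist by the theorem above) are computed as in SETS before the standard statement for set-valued functors applies. The adjoint-functor argument sketched above sidesteps this bookkeeping entirely, so I would take that as the primary proof and mention the representability route only as a remark.
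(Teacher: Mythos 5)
Your argument is correct, but it takes a different route from the paper. The paper deduces the corollary from the immediately preceding proposition: having shown (via Proposition \ref{tomrep}, i.e.\ ``any set-valued functor with a left adjoint is representable'') that $\G$ is representable, it simply cites the fact that representable functors preserve all limits. You instead bypass representability altogether: you note that $\G$ is one half of the equivalence of Theorem \ref{equvivalncecat}, hence is in particular a right adjoint to its quasi-inverse $\F$, and invoke RAPL. Both are legitimate one-line deductions from standard category theory, but your route is arguably tighter for this specific statement, and your closing remark identifies a real point that the paper glosses over: the classical statement ``representable functors preserve limits'' concerns functors into SETS, whereas the corollary asserts preservation of limits as computed in $\cod$, a non-full subcategory of SETS; to make the paper's argument fully precise one must check that the relevant limits in $\cod$ agree with those in SETS (or argue directly in $\cod$, as you do). Your adjunction argument needs no such reconciliation, since the hom-set isomorphisms are taken in $\cod$ itself --- indeed, since $\G$ is an equivalence it preserves all limits and colimits outright, which is the cleanest way to see the result.
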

\begin{proof}
The proof of this corollary follows from the fact that representable functors preserve all the limits.
\end{proof}
\section{Further discussion}
In this section we discuss the categories defined by A Jeffs \cite{jeffs2020morphisms} and the categories in our previous section.  A Jeffs defined and worked with special subsets of a code called trunks. Using these trunks Jeffs defined special morphisms. We will refer to these morphisms as trunk morphisms in this paper. Before we understand trunk morphisms let us look at the definition for trunks given by Jeffs. 
\begin{definition} \cite[Definition 1.1]{jeffs2020morphisms}
	Let $\C$ be a code on $n$ neurons, and let $\sigma\subseteq[n]$. The trunk of $\sigma$ in $\C$ is defined to be the set $$\operatorname{Tk}_\C(\sigma)= \{c\in\C \mid \sigma \subseteq c\}.$$   
\end{definition}
\no A subset of $\C$  is called a trunk in $\C$  if it is empty, or equal to $\operatorname{Tk}_\C(\sigma)$ for some $\sigma  \subseteq  [n]$.
\begin{definition} \cite[Definition 1.2]{jeffs2020morphisms} \label{jeffmorphis}
	Let $\C$  and $\D$  be codes. A function $f : \C  \rightarrow  \D $ is a trunk morphism if for every
	trunk $T \subseteq  \D$  the pre-image $f^{-1} (T )$ is a trunk in $\C$. A trunk morphism will be a trunk isomorphism if it has an inverse which is also a trunk morphism. 
\end{definition}
 We observed that the elementary code maps of Theorem \ref{thmnrh} also satisfies the above definition. So, given any neural ring homomorphism $\phi:\ring{\D} \to \ring{\C}$, the associated code map $q_\phi: \C\to \D$ is a trunk morphism.

 Jeffs, later defined a category, denoted by \textbf{Code} with neural codes as objects and with morphisms as trunk morphisms.  We observe that $\cod$ is subcategory of \textbf{Code}. Clearly it is not full as there are trunk morphisms which are not morphisms in $\cod$. 
 \begin{eg}
 	Let $\C=\{000,100,001,110,011\}$ and $\D=\{0000,1000,0010,1100,0011\}$. Consider the map from $q:\C\to \D$ given by $000\mapsto 0000, 100 \mapsto 1000, 001 \mapsto 0010, 110\mapsto 1100$ and $011\mapsto 0011$. We observe that this is a trunk morphisms. However $q$ is not composition of elementary code maps since we cannot write a key vector to it.    
 \end{eg}
 
 Next, Jeffs defined a category \textbf{NRing} with neural rings as objects. The morphisms in this category are monomial maps which he defines as follows: 
 \begin{definition}\cite[Definition 6.2]{jeffs2020morphisms}
 	Let $\ring{\C}$ and $\ring{\D}$ be neural rings with coordinates $\{x_1,\dots,x_n\}$ and $\{y_1,\dots,y_m\},$ respectively. A monomial map from $\ring{\C}$ to $\ring{\D}$ is a ring homomorphism $\phi:\ring{\C}$ to $\ring{\D}$ with the property that if $p\in\ring{\C}$ is a monomial in the $x_i$, then $\phi(p)$ is a monomial in the $ y_j$ or it is zero. 
 \end{definition}
 Jeffs showed that \textbf{NRing} becomes a category. We observe that given a neural ring homomorphism it is a monomial maps. However, there are monomial maps which are not neural ring homomorphisms. So, we have that $\mN$ is a subcategory of \textbf{NRing} and it is clearly not full. Further, Jeffs showed that these the categories \textbf{NRing} and \textbf{Code} are in dual equivalence. The products of codes defined by Jeffs \cite{jeffs2020morphisms} become the products of the category \textbf{Code}.  We observe the following for the category \textbf{NRing}:
 
  \begin{enumerate}
  	\item The objects $\ring{\emptyset}$ and $\ring{\{1\}}$ in \textbf{NRing} are final and initial objects in the category. The proof of this is similar to Proposition \ref{propintial}. \item Furthermore, the category \textbf{NRing} has all coequalizers and can be shown similarly as in  Proposition \ref{propcoeq}. 
  \end{enumerate}

		\bibliographystyle{plain}
	\bibliography{refs}
Authors Affiliation:  \vspace{0.2cm}\\
Neha Gupta \\ Assistant Professor \\ Department of Mathematics \\ Shiv Nadar Institution of Eminence (Deemed to be University) \\ Delhi-NCR \\ India \\ Email: neha.gupta@snu.edu.in  \vspace{0.6cm}\\
Suhith K N \\ Research Scholar \\ Department of Mathematics \\ Shiv Nadar Institution of Eminence (Deemed to be University) \\ Delhi-NCR \\ India \\ Email: sk806@snu.edu.in \\
Suhith's research is partially supported by Inspire fellowship from DST grant IF190980.
\end{document}